\documentclass[12pt]{article}
\usepackage{amsthm}
\usepackage{amsfonts}
\usepackage{amsmath}
\usepackage{amssymb}
\usepackage{colonequals}
\usepackage{epsfig}
\usepackage{url}
\newcommand{\GG}{{\cal G}}

\newtheorem{theorem}{Theorem}
\newtheorem{conj}[theorem]{Conjecture}
\newtheorem{corollary}[theorem]{Corollary}
\newtheorem{lemma}[theorem]{Lemma}

\newtheorem{observation}[theorem]{Observation}

\newcommand{\col}{\text{col}}

\newcommand{\eps}{\varepsilon}
\newcommand{\mc}[1]{\mathcal{#1}}
\newcommand{\brm}[1]{\operatorname{#1}}
\newcommand{\bb}[1]{\mathbb{#1}}

\title{Islands in minor-closed classes. I. Bounded treewidth and separators}
\author{Zden\v{e}k Dvo\v{r}\'ak\thanks{Charles University, Prague, Czech Republic.
E-mail: {\tt rakdver@iuuk.mff.cuni.cz}.  Supported by project 17-04611S (Ramsey-like aspects of graph
coloring) of Czech Science Foundation.}\\
\and Sergey Norin\thanks{Department of Mathematics and Statistics, McGill University. Email: {\tt snorin@math.mcgill.ca}. Supported by an NSERC Discovery grant.}}
\date{}

\begin{document}
\maketitle

\begin{abstract}
The \emph{clustered chromatic number} of a graph class is the minimum integer
$t$ such that for some $C$ the vertices of every graph in the class can be
colored in $t$ colors so that every monochromatic component has size at most
$C$.  We show that the clustered chromatic number of the class of graphs
embeddable on a given surface is four, proving the conjecture of Esperet and
Ochem. Additionally, we study the list version of the concept and
characterize the minor-closed classes of graphs of
bounded treewidth with given clustered list chromatic number. We further strengthen the above results to solve some extremal problems on bootstrap percolation of  minor-closed classes.
\end{abstract}

\section{Introduction}\label{sec-intro}
Let $G$ and $H$ be graphs.  A \emph{model} of $H$ in $G$ is a function $\mu$ assigning to vertices of $H$ pairwise vertex-disjoint
non-empty connected subgraphs of $G$ such that for every $uv\in E(H)$, there exists an edge of $G$ with one end in $\mu(u)$
and the other end in $\mu(v)$.   If $H$ has a model in $G$, we say that $H$ is a \emph{minor} of $G$, otherwise, we say that $G$ is \emph{$H$-minor-free}. A class $\mc{G}$ of graphs is \emph{a (proper) minor-closed class} if $\mc{G}$ does not contain all graphs, and  for every graph $G \in \mc{G}$  every minor of $G$ also belongs to $\mc{G}$. We define the \emph{chromatic number $\chi(\mc{G})$} of a graph class $G$ as the minimum integer $t$ such that every graph $G \in \mc{G}$ is properly $t$-colorable
(and $\chi(\mc{G})=\infty$ if no such $t$ exists). 
The famous Hadwiger's conjecture can be considered as a characterization of minor-closed graph classes with given chromatic number.

\begin{conj}[Hadwiger's conjecture~\cite{Hadwiger}] 
Let $t \geq 1$ be an integer, and let $\mc{G}$ be a minor-closed class of graphs. Then $\chi(\mc{G}) \leq t$ if and only if $K_{t+1} \not \in \mc{G}$.
\end{conj}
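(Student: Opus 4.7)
The plan splits into two directions. The forward (``only if'') direction is immediate: $K_{t+1}$ has chromatic number $t+1$, so if $K_{t+1} \in \mc{G}$ then $\chi(\mc{G}) \ge t+1 > t$, contradicting the hypothesis. This half requires no further work.

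The reverse (``if'') direction is the substantive content of Hadwiger's conjecture: every $K_{t+1}$-minor-free graph admits a proper $t$-coloring. This is a famously open problem, so any genuine plan has to proceed case by case in $t$. For $t \le 2$ the claim is trivial, since a $K_3$-minor-free graph is a forest. For $t = 3$, I would use that every $K_4$-minor-free graph is series-parallel, hence of treewidth at most $2$ and $2$-degenerate, so a greedy argument gives a $3$-coloring. For $t = 4$, the plan is to invoke Wagner's structure theorem, which represents every $K_5$-minor-free graph as a clique-sum of planar graphs and the Wagner graph $V_8$ over cliques of size at most three; each summand is $4$-colorable (planar pieces by the Four Color Theorem, $V_8$ directly), and $4$-colorings glue across small clique-sums after a relabeling of colors. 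For $t = 5$, I would follow the Robertson--Seymour--Thomas reduction: show that a minimum counterexample contains an apex vertex whose deletion yields a planar graph, $4$-color the planar part, and use a fifth color on the apex.

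The main obstacle is the range $t \ge 6$, where no structural decomposition of $K_{t+1}$-minor-free graphs strong enough to force $t$-colorability is known. The best general upper bounds, due to Kostochka and Thomason, give chromatic number at most $O(t\sqrt{\log t})$, and the remaining $\sqrt{\log t}$ factor is precisely where current techniques stall. In the context of the present paper the conjecture is quoted only as motivation for the clustered analogue that the authors actually address, so I would not attempt a new attack on it here; instead, I would pass to the clustered relaxation, where unconditional progress is available and where the paper's contributions lie.
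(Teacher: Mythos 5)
This statement is Hadwiger's conjecture, which the paper quotes as a \emph{conjecture} with no proof attached, and you correctly recognize that it is a famous open problem rather than something to be proved here. Your survey of its status is accurate — the ``only if'' direction is trivial, the cases $t\le 5$ are settled (with $t=4$ equivalent to the Four Color Theorem via Wagner's decomposition and $t=5$ by Robertson–Seymour–Thomas), and $t\ge 6$ remains open with the best general bounds of order $t\sqrt{\log t}$ (since improved by Norin–Postle–Song and Delcourt–Postle, but still far from linear) — so declining to attempt it and pointing to the clustered relaxation is exactly the right call.
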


For integers $C\ge 1$ and $t\ge 0$, a \emph{$t$-coloring with clustering $C$} of a graph $G$ is a (not necessarily proper) coloring of vertices of $G$ using $t$ colors
such that $G$ contains no monochromatic connected subgraph with more than $C$ vertices. We denote by $\chi_C(G)$ the minimum $t$ such that $G$ admits a $t$-coloring with clustering $C$. The \emph{clustered chromatic number $\chi_\star(\mc{G})$} of a graph class $\mc{G}$ is the minimum $t$ such that there exists $C$ so that $\chi_C(G) \leq t$ for every graph $G \in \mc{G}$
($\chi_\star(\mc{G})=\infty$ if no such $t$ exists).
The clustered chromatic number of minor-closed classes of graphs has been recently actively investigated, motivated in part by Hadwiger's conjecture. 
Let $\mc{X}(H)$ denote the minor-closed class consisting of all $H$-minor-free graphs.  Kawarabayashi and Mohar~\cite{KawMoh07} proved the first linear bound on $\chi_\star(\mc{X}(K_{t}))$ showing that $\chi_\star(\mc{X}(K_{t})) \leq \lceil \frac{31t}{2} \rceil$.\footnote{No such linear bound is known for $\chi(\mc{X}(K_{t}))$.} The upper bound on $\chi_\star(\mc{X}(K_{t}))$ has been successively improved in~\cite{Wood10,EKKOS15,LiuOum15}. Most recently, using a beautiful self-contained argument, van den Heuvel and Wood~\cite{vdHW17} have shown that $\chi_\star(\mc{X}(K_{t})) \leq 2t-2$.  

The above definitions can be straightforwardly extended from coloring to list coloring, and many of our techniques extend as well. A \emph{$t$-list assignment $L$} for a graph $G$ assigns a finite set $L(v)$ of size at least $t$ to every vertex $v \in V(G)$. An \emph{$L$-coloring  with clustering $C$} of a graph $G$ assigns to every vertex $v$ a color from $L(v)$ such that $G$ contains no monochromatic connected subgraph with more than $C$ vertices. Let $\chi^l_C(G)$ denote the minimum $t$ such that $G$ admits an $L$-coloring  with clustering $C$ for every $t$-list assignment $L$.
Clearly, $\chi^l_C(G) \geq \chi_C(G)$. The \emph{clustered list chromatic number $\chi^l_\star(\mc{G})$} of a graph class $\mc{G}$ is the minimum $t$ such
that there exists $C$ so that $\chi^l_C(G) \leq t$ for every graph $G \in \mc{G}$ (and $\chi^l_\star(\mc{G})=\infty$ if no such $t$ exists).

In the sequel to this paper 
we show that $\chi_\star(\mc{X}(K_{t})) = \chi^l_\star(\mc{X}(K_{t})) = t-1$, proving the weakening of Hadwiger's conjecture for clustered chromatic number.\footnote{Note that Hadwiger's conjecture is false for list colorings.} In the current paper we introduce two less technical ingredients of the proof, which might be of independent interest. In particular, our results imply the above mentioned bound on $\chi_\star(\mc{X}(K_{t}))$ of van den Heuvel and Wood, using very different techniques.

Rather than working with the clustered (list) chromatic number, we bound a different parameter which dominates it.
The \emph{coloring number} $\col(G)$ of a graph $G$ is the smallest integer $t$ such that every non-empty subgraph of $G$ contains a vertex of degree less than $t$.
A standard greedy argument shows that $\chi(G)\le \col(G)$.  Let us now introduce a similar notion for clustered coloring, first defined by Esperet and Ochem~\cite{EspOch16}. 
A \emph{$t$-island} in a graph $G$ is a non-empty subset $S$ of vertices of $G$ such that each vertex of $S$ has less than $t$ neighbors in $V(G)\setminus S$.
Let $\col_C(G)$ denote the smallest integer $t$ such that each non-empty subgraph of $G$ contains a $t$-island of size at most $C$.
Hence, $\col_1(G)=\col(G)$.  The following analogue of the bound $\chi(G)\le \col(G)$  holds for the coloring with bounded clustering.

\begin{lemma}\label{lemma-coloring}
For every integer $C\ge 1$, every graph $G$ satisfies $\chi^l_C(G)\le \col_C(G)$.
\end{lemma}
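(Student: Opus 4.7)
The plan is to prove the lemma by induction on $|V(G)|$, using a peeling argument that mimics the standard greedy proof of $\chi(G)\le \col(G)$ but removes a whole small $t$-island at each step instead of a single low-degree vertex.

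Set $t=\col_C(G)$ and let $L$ be an arbitrary $t$-list assignment for $G$. The base case $V(G)=\emptyset$ is vacuous. For the inductive step, use the definition of $\col_C$ applied to $G$ itself to obtain a non-empty $t$-island $S\subseteq V(G)$ with $|S|\le C$; thus every $v\in S$ has fewer than $t$ neighbors in $V(G)\setminus S$. Since $\col_C$ is monotone under taking subgraphs, $\col_C(G-S)\le t$, so the induction hypothesis applied to $G-S$ (with $L$ restricted to $V(G)\setminus S$) yields an $L$-coloring $\varphi'$ of $G-S$ with clustering $C$. Extend $\varphi'$ to $G$ as follows: for each $v\in S$, the at most $t-1$ colors used by $\varphi'$ on the neighbors of $v$ outside $S$ cannot exhaust the $t$-element list $L(v)$, so we may pick some $c_v\in L(v)$ not appearing on any neighbor of $v$ in $V(G)\setminus S$ and set $\varphi(v)=c_v$.

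The only real point to verify—and the main thing to be careful about—is that $\varphi$ still has clustering $C$ on all of $G$; naively one worries that joining $S$ to $G-S$ could merge small monochromatic components into a larger one. Let $H$ be any monochromatic connected subgraph of $G$ under $\varphi$, of color $c$. If $V(H)\cap S=\emptyset$, then $H$ lies entirely in $G-S$ and is monochromatic under $\varphi'$, so $|V(H)|\le C$ by induction. Otherwise pick $v\in V(H)\cap S$. By construction, no neighbor of $v$ in $V(G)\setminus S$ has color $c_v=c$, so every neighbor of $v$ in $H$ lies in $S$. Propagating this along any path in $H$ starting at $v$: the first vertex on such a path that lay outside $S$ would be a neighbor in $H$ of some vertex of $V(H)\cap S$, which we just ruled out. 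Therefore $V(H)\subseteq S$, giving $|V(H)|\le|S|\le C$.

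Thus $\varphi$ is an $L$-coloring of $G$ with clustering $C$, completing the induction and showing $\chi^l_C(G)\le t=\col_C(G)$. There is no genuine obstacle here beyond the connectivity-propagation argument in the previous paragraph; everything else is a direct transcription of the classical greedy proof, with ``remove a vertex of small back-degree'' replaced by ``remove a small island of vertices of small outside-degree.''
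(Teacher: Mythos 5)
Your proof is correct and follows essentially the same inductive peeling argument as the paper's proof. You spell out more carefully the two points the paper states tersely—namely that an available color from $L(v)$ exists for each $v\in S$ because $v$ has fewer than $t$ neighbors outside $S$, and the connectivity-propagation argument showing a monochromatic component meeting $S$ must lie entirely in $S$—but the underlying approach is identical.
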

\begin{proof} Let $L$ be a $t$-list assignment for $G$.
Suppose that each non-empty subgraph of $G$ contains a $t$-island of size at most $C$.  By induction on the size of $G$, we show that $G$ has an
$L$-coloring with clustering at most $C$.  This is trivial if $G$ is empty, hence assume that $V(G)\neq\emptyset$. Let $S$ be a $t$-island in $G$ of size at most $C$.
By the induction hypothesis, $G - S$ has an $L$-coloring with component size at most $C$.  Color each vertex of $v \in S$ by an arbitrary color from $L(v)$ different from
the colors of its neighbors in $V(G)\setminus S$.  Clearly, each connected monochromatic subgraph of $G$ is contained either in $S$ or in $V(G)\setminus S$, and since $|S|\le C$,
we conclude that we obtained an $L$-coloring of $G$ with clustering at most $C$.
\end{proof}

For a class $\GG$ of graphs, let the \emph{clustered coloring number $\col_\star(\GG)$ of $\mc{G}$} denote the smallest integer $t$ such that there exists $C\ge 1$ such that
$\col_C(G)\le t$ for every graph $G\in\GG$ ($\col_\star(\GG)=\infty$ if no such $t$ exists). By Lemma~\ref{lemma-coloring}, $\col_\star(\mc{G})  \geq \chi^l_\star(\mc{G}) \geq \chi_\star(\mc{G})$, i.e. any upper bound on $\col_\star(\mc{G})$ gives an upper bound on $\chi_\star(\mc{G})$. This observation motivates our investigation of the clustered coloring number in this paper.

It follows from Observation~\ref{obs-basgr} below that 
$\col_\star(\mc{X}(K_{t+1})) \geq t$.  On the other hand, for every $n\ge 1$,
there exist minor-closed graph classes $\GG$ containing $K_n$ with $\col_\star(\GG)=1$, e.g., the class of all graphs with at most $n$
vertices.  Thus the complete minors present in the minor-closed class do not determine the clustered coloring number of the class. This motivates us to ask for an exact description of minor-closed classes $\GG$ with $\col_\star(\GG)\le t$.

Two kinds of graphs seem to play an important role in this context.  One of them are the complete bipartite graphs $K_{n,m}$.
For the other one, let $I_n+P_m$ denote the graph consisting of a path on $m$ vertices and $n$ additional vertices adjacent to all the vertices of the path.
The following is straightforward, and also follows from a stronger Lemma~\ref{lem-listksm} below.

\begin{observation}\label{obs-basgr}
Let $t\ge 1$ be an integer.  For every $C\ge 1$, there exists $m\ge 1$ such that all $t$-islands in $K_{t,m}$ and $I_{t-1}+P_m$
have more than $C$ vertices.
\end{observation}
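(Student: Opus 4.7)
The plan is to choose $m$ large (concretely $m \ge C+t$ will be more than enough) and, for each of the two graphs, rule out small $t$-islands by a direct degree count, using the fact that the graphs have a small "dominating side" whose vertices have very high degree.

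For $K_{t,m}$ with bipartition $(A,B)$ where $|A|=t$ and $|B|=m$, I would first observe that any $t$-island $S$ must meet $A$: otherwise every $v\in S\subseteq B$ has all $t$ of its neighbors in $A\subseteq V(G)\setminus S$, contradicting the $t$-island condition. Fix $a\in S\cap A$. Since $a$ has exactly $m$ neighbors, all in $B$, and the $t$-island condition permits fewer than $t$ of them to lie outside $S$, we get $|S\cap B|\ge m-t+1$, and hence $|S|\ge m-t+2$. Choosing $m\ge C+t$ yields $|S|>C$.

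For $I_{t-1}+P_m$, write $U=\{u_1,\dots,u_{t-1}\}$ for the apex vertices and $P=v_1\cdots v_m$ for the path, and split according to whether $S\cap U$ is empty. If some $u_j\in S$, then $u_j$ has $m$ neighbors (all on $P$), so the same counting as above forces $|S\cap V(P)|\ge m-t+1$ and therefore $|S|\ge m-t+2>C$. Otherwise $S\subseteq V(P)$, and then each $v_i\in S$ already has all $t-1$ of its $U$-neighbors outside $S$, which leaves room for \emph{no} path-neighbor outside $S$. Thus $S$ is closed under path-adjacency, and by connectivity of $P$ we get $S=V(P)$, so $|S|=m>C$ as soon as $m>C$.

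Both arguments are elementary, and I do not expect a genuine obstacle; the only conceptual point is to notice that the $t$-island condition, when applied to a vertex of very high degree, forces nearly the entire opposite side into $S$, while in the complementary case path-connectivity alone forces $S$ to swallow the whole path. The same bound $m\ge C+t$ works uniformly for both graphs.
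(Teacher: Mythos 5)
Your proof is correct. The paper in fact gives no explicit argument for Observation~\ref{obs-basgr}: it only remarks that the statement is ``straightforward'' and that it also follows from the stronger Lemma~\ref{lem-listksm}. Your degree-counting argument is precisely the ``straightforward'' direct proof the authors allude to, and it is complete: in $K_{t,m}$ a $t$-island $S$ avoiding the $t$-side would have a vertex of $B$ with all $t$ neighbours outside $S$, so $S$ meets $A$, and then a vertex $a\in S\cap A$ forces $|S\cap B|\ge m-t+1$; in $I_{t-1}+P_m$ the case $S\cap U\ne\emptyset$ is handled identically, while if $S\subseteq V(P)$ the $t-1$ apex neighbours exhaust the allowance of $<t$ outside-neighbours, so $S$ is closed under path-adjacency and hence $S=V(P)$. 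Taking $m\ge C+t$ suffices in every case. For comparison, the paper's route through Lemma~\ref{lem-listksm} constructs an explicit $t$-list assignment to lower-bound $\chi_C^l$, which is a strictly stronger (list-coloring) conclusion but a less elementary technique; for the observation itself your direct argument is the natural one.
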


Thus a class $\GG$ with $\col_\star(\GG)\le t$ can contain only finitely many graphs of form $K_{t,m}$ or $I_{t-1}+P_m$.  We conjecture
that for minor-closed classes, this condition is also sufficient.

\begin{conj}\label{conj-chara}
A minor-closed class of graphs $\GG$ satisfies $\col_\star(\GG)\le t$ if and only if there exists $m\ge 1$ such that
$K_{t,m}\not\in \GG$ and $I_{t-1}+P_m\not\in\GG$.
\end{conj}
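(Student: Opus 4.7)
The plan is to prove the two directions separately. The ``only if'' direction follows immediately from Observation~\ref{obs-basgr}: if $\col_\star(\GG)\le t$ with witnessing clustering constant $C$, and if for every $m\ge 1$ at least one of $K_{t,m}$ or $I_{t-1}+P_m$ belonged to $\GG$, then applying Observation~\ref{obs-basgr} with this $C$ produces an $m$ for which every $t$-island in both graphs exceeds $C$ vertices; whichever of the two lies in $\GG$ then contradicts $\col_C(\cdot)\le t$ on $\GG$.

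For the ``if'' direction, the strategy is to first settle the bounded-treewidth case---which is the announced focus of this paper---and then reduce arbitrary minor-closed classes to it via the graph minor structure theorem. In the bounded-treewidth regime, I would fix a tree decomposition $(T,\beta)$ of $G$ of optimal width and attempt to produce a $t$-island of bounded size from the bag-union of a carefully chosen subtree $T'$ of $T$, using as boundary the vertices shared with the complement. The two forbidden minors play complementary roles: forbidding $K_{t,m}$ bounds the multiplicity of vertex-disjoint paths to any fixed small adhesion, controlling the effective ``width'' of the boundary of a candidate island, while forbidding $I_{t-1}+P_m$ bounds how long a path in the torso can simultaneously be attached to $t-1$ common outside vertices, controlling the ``depth'' over which one must extend $T'$ before finding safe adhesions. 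The concrete argument would start from an inclusion-minimal subtree $T'$ and grow it greedily whenever a boundary vertex retains at least $t$ outside neighbors, charging each growth step either to a branching event (contributing a column toward a $K_{t,m}$-minor whose $t$-side is a common adhesion) or to a linear extension along the decomposition (contributing a path vertex toward an $I_{t-1}+P_m$-minor whose apices are the stable adhesion).

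The main obstacle is quantifying this branching/linear-growth dichotomy: one must simultaneously bound the size of $T'$ and its boundary degrees in terms of $t$ and the forbidden-minor parameter $m$, which amounts to a Ramsey-style argument converting non-termination of the growth process into the explicit construction of one of the two forbidden minors. For general minor-closed classes, the Robertson--Seymour decomposition reduces the problem to almost-embeddable torsos glued along small adhesions; the apex vertices of each torso can be absorbed into growing islands provided their number is bounded, which itself should follow from excluding a large $I_{t-1}+P_m$, and the remaining surface-embeddable part is presumably handled in the announced sequel. Thus for the current paper the obstacle reduces to making the island-growth inside a bounded-treewidth tree decomposition terminate quantitatively, and the anticipated payoff is a proof in which the clustering constant $C$ is given explicitly as a Ramsey-type function of $t$ and the smallest $m$ certifying the absence of $K_{t,m}$ and $I_{t-1}+P_m$ from $\GG$.
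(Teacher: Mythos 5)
The statement you are proving is explicitly labeled a \emph{conjecture} in the paper, and the paper does not prove it: it proves only the bounded-treewidth special case (Theorem~\ref{thm-bndtw}) and announces that a sequel handles $\mc{X}(K_t)$. The general conjecture remains open, so any ``blind proof'' of it would be a significant new result; you partially acknowledge this by deferring the surface-embeddable part to the sequel, but that defers the hardest content of the ``if'' direction rather than supplying it.

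Your ``only if'' direction is correct and is exactly the paper's reasoning via Observation~\ref{obs-basgr}. For the ``if'' direction restricted to bounded treewidth, your plan differs from the paper's in an interesting way: you propose to grow a subtree $T'$ of a width-optimal tree decomposition greedily, charging each growth step to a ``branching'' event (building a $K_{t,m}$-minor) or a ``linear'' event (building an $I_{t-1}+P_m$-minor). The paper instead first converts the tree decomposition into a long proper \emph{path} decomposition (Lemma~\ref{lemma-tree-to-path}), then successively normalizes it to be linked, appearance-universal, and with large interiors (Lemmas~\ref{lemma-linked}--\ref{lemma-lint}), and only then applies a pigeonhole argument on ``extended bags'' (Observation~\ref{obs-prop}, Lemma~\ref{lemma-main-tw}) to extract either the forbidden minor or a block of bags whose internal vertices are $t$-islands; a separate inductive lemma (Lemma~\ref{lemma-comb}) then produces linearly many disjoint such islands. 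Converting to a path decomposition collapses the branching case you worry about into the path case and makes the Ramsey step a single pigeonhole over a finite extended-bag signature. Your subtree-growth plan has to contend with unbounded branching and with the fact that the subtree boundary can have adhesion up to the treewidth at every leaf; you name this as your ``main obstacle'' but do not resolve it, so the bounded-treewidth argument in your sketch is incomplete.

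The larger gap is the reduction to bounded treewidth. You invoke the Robertson--Seymour structure theorem and suggest absorbing apices and handing vortices/surface parts to the sequel, but there is no clean black-box reduction of $\col_\star$ (or even of clustered chromatic number) from general minor-closed classes to bounded-treewidth ones: apex vertices attach to an unbounded fraction of the graph and cannot simply be ``absorbed into growing islands,'' vortices introduce unbounded adhesion along the surface, and the clique-sum gluing changes how islands interact. This is precisely why the paper stops at Theorem~\ref{thm-bndtw} and states the general claim as a conjecture. Presenting the reduction as ``presumably handled in the announced sequel'' leaves the essential difficulty untouched, so your proposal does not constitute a proof of Conjecture~\ref{conj-chara}.
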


Conjecture~\ref{conj-chara} implies that $\col_\star(\mc{X}(K_{t+1}))=t$ (and hence $\chi_\star(\mc{X}(K_{t+1})) = t$), since $K_{t+1}$ 
is a minor of $K_{t,m}$ and $I_{t-1}+P_m$ for all $m\ge t$. In the next section we show that if Conjecture~\ref{conj-chara} holds, then $\chi^l_\star(\GG)=\col_\star(\GG)$
for every minor-closed graph class $\GG$. On the other hand, the parameters $\chi_\star$ and $\chi^l_\star$ are not tied, and thus, unfortunately, one can not hope to extend
the methods of this paper to characterize minor-closed graph classes with given clustered chromatic number. We refer the reader to~\cite[Conjectures 30 and 32]{NSSW17} for a conjectured characterization.

In the next section we state our main results and present several applications, including proofs of two conjectures of Esperet and Ochem.
We prove our two main results in Sections~\ref{sec-percolation} and~\ref{sec-treedec}.

\section{Our results}

We start this section by presenting a strengthening of Observation~\ref{obs-basgr} to clustered list chromatic number.

\begin{lemma}\label{lem-listksm} 
For all positive integers $C,t$ there exists a positive integer $m$ such that $\chi_C^l(K_{t,m}) \geq t+1$ and $\chi_C^l(I_{t-1}+P_m) \geq t+1$.
\end{lemma}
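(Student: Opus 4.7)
The plan is to exhibit explicit $t$-list assignments under which no $C$-clustered coloring exists. In both cases I give the $t$ (resp.\ $t-1$) high-degree vertices pairwise disjoint lists of size $t$, so that the set of colors used on the apex side is forced to coincide with one of a finite family $\{S_K\}_{K}$ indexed by a tuple $K\in\{1,\dots,t\}^{t}$ (resp.\ $\{1,\dots,t\}^{t-1}$), and $|S_K|$ automatically attains its maximum value $t$ (resp.\ $t-1$). I then partition the remaining $m$ vertices into one consecutive block $B_K$ of size $N$ per tuple $K$, and design the list on $B_K$ so that it acts as a ``trap'' precisely when the adversary's apex coloring realizes $K$.

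For $K_{t,m}$, set $L(v)=S_K$ for every $v\in B_K$. When the adversary's apex coloring realizes $K$, every vertex of $B_K$ is forced into $S_K$, and pigeonhole yields a color $c\in S_K$ used by at least $\lceil N/t\rceil$ vertices of $B_K$; together with the unique apex colored $c$ this gives a monochromatic star on $\lceil N/t\rceil + 1$ vertices, which exceeds $C$ as soon as $N=t(C-1)+1$. For any tuple $K'\neq K$ we have $S_{K'}\setminus S_K\neq\emptyset$, so every $v\in B_{K'}$ can be assigned a color in $S_{K'}\setminus S_K$; since the $B$-vertices are mutually non-adjacent in $K_{t,m}$, this produces only singleton monochromatic components.

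For $I_{t-1}+P_m$, set $L(v)=S_K\cup\{e_K\}$ on $B_K$, where $e_K$ is a fresh color appearing in no other list. When the adversary realizes $K$, at most $(t-1)(C-1)$ vertices of $B_K$ can use a color in $S_K$ (else some apex component exceeds $C$), so at least $N-(t-1)(C-1)$ vertices of $B_K$ take color $e_K$; these are broken into at most $(t-1)(C-1)+1$ maximal runs along the subpath on $B_K$, so some $e_K$-run has length at least $\lceil (N-(t-1)(C-1))/((t-1)(C-1)+1)\rceil$, a monochromatic path exceeding $C$ once $N$ is of order $tC^{2}$. For any $K'\neq K$, the adversary colors $B_{K'}$ with $e_{K'}$ on every vertex except every $(C+1)$-th, which receives a color from $S_{K'}\setminus S_K$; this bounds each $e_{K'}$-run by $C$ and leaves the inserted vertices isolated within $B_{K'}$, and by arranging offsets so that both endpoints of $B_{K'}$ are colored $e_{K'}$ no monochromatic component crosses a block boundary.

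The point that requires the most care is the $I_{t-1}+P_m$ case, where the adversary might try to circumvent the pigeonhole inside $B_K$ by exporting its excess to neighboring blocks; this is precisely why the escape colors $e_K$ are made pairwise distinct and each confined, by the list design, to the single block $B_K$. With these choices one can take $m=t^{t}(t(C-1)+1)$ for $K_{t,m}$ and $m$ of order $t^{t-1}\cdot tC^{2}$ for $I_{t-1}+P_m$.
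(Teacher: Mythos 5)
Your proof is correct and is essentially the paper's argument: disjoint lists on the apex vertices plus one designated block on the $m$-side per apex coloring, with the block's list chosen so that pigeonhole forces a large monochromatic star (for $K_{t,m}$) or a long monochromatic run (for $I_{t-1}+P_m$). The only cosmetic differences are that you index blocks by tuples rather than color sets and use a fresh escape color $e_K$ per block where the paper reuses a color from its fixed palette $S$; your paragraph about how the adversary can safely color the blocks $B_{K'}$ with $K'\neq K$ is unnecessary for the lower bound, since you only need to exhibit one monochromatic component of size more than $C$.
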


\begin{proof} We present the proof for $G=I_{t-1}+P_m$, the proof for $K_{t,m}$ is similar.
Since $\chi_C(G)\ge 2$ for every connected graph $G$ with more than $C$ vertices, we can assume that $t\ge 2$.
Let $S$ be a set of size $(t-1)t$.
Let $L$ be a $t$-list assignment for $G$ such that $L(v) \subseteq S$ for all $v \in V(G)$ and the vertices of $I_{t-1}$
are assigned disjoint subsets of $S$. Suppose further that for every subset $T \subseteq S$ with $|T|=t$ there exists a set $Q_T$ of
$tC^2$ consecutive vertices of $P_m$ such that $L(v)=T$ for every $v \in Q_T$. Clearly such a list assignment exists if $m$ is sufficiently large.

Consider an $L$-coloring of $G$. It remains to show that there exists a monochromatic connected subgraph of size more than $C$. Let $T'$
be the set of colors assigned to vertices of $I_{t-1}$. Then $|T'|=t-1$ by the choice of $L$. Let $T=T' \cup \{c\}$ for some color $c \in S \setminus T'$,
and let $Q_T$ be as defined in the previous paragraph. If some color in $T'$ is used on at least $C$ vertices of $Q_T$ then $G$ contains
a monochromatic connected subgraph induced by these vertices and a vertex of $I_{t-1}$. Otherwise, $G$ contains a monochromatic subpath
of $P_m$ of length at least $(C+1)$ induced by the vertices in $Q_T$ which are colored using color $c$.
\end{proof}

Lemma~\ref{lem-listksm} immediately implies the following.

\begin{corollary}\label{cor-liststar}
Let $\GG$ be a class of graphs such that $\chi^l_\star(\GG)\le t$. Then there exists $m\ge 1$ such that
$K_{t,m}\not\in \GG$ and $I_{t-1}+P_m\not\in\GG$.
\end{corollary}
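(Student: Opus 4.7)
The plan is to derive the corollary as a direct consequence of Lemma~\ref{lem-listksm}, essentially by matching quantifiers. The hypothesis $\chi^l_\star(\GG) \le t$ gives, by definition, a single constant $C \ge 1$ that works uniformly across the whole class, namely $\chi^l_C(G) \le t$ for every $G \in \GG$. Lemma~\ref{lem-listksm}, applied to this pair $(C,t)$, then hands us an $m$ for which both $K_{t,m}$ and $I_{t-1}+P_m$ require at least $t+1$ colors from some $t$-list assignment with respect to the clustering bound $C$. Combining these two statements forces both graphs to lie outside $\GG$.

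Concretely, I would proceed as follows. First, unpack the hypothesis to fix $C \ge 1$ as in the definition of $\chi^l_\star(\GG) \le t$. Next, apply Lemma~\ref{lem-listksm} with this $C$ and the given $t$ to obtain $m \ge 1$ such that $\chi_C^l(K_{t,m}) \ge t+1$ and $\chi_C^l(I_{t-1}+P_m) \ge t+1$. Finally, observe that if $K_{t,m}$ were a member of $\GG$, the uniform bound would yield $\chi^l_C(K_{t,m}) \le t$, contradicting the lemma; the same argument excludes $I_{t-1}+P_m$. Hence this $m$ is as required.

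There is no real obstacle here: Lemma~\ref{lem-listksm} does all the work, and the corollary amounts to little more than contrapositively rephrasing ``for every $G\in\GG$'' as ``neither $K_{t,m}$ nor $I_{t-1}+P_m$ lies in $\GG$.'' The only subtlety worth noting is the order of quantifiers, namely that one must first extract the witness $C$ from the hypothesis and only then invoke the lemma, so that the resulting $m$ depends on this specific $C$ rather than on an a priori unbounded family of clustering constants.
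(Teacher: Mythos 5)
Your argument is exactly the one the paper intends: the paper states that Lemma~\ref{lem-listksm} "immediately implies" the corollary, and your proof simply spells out the quantifier bookkeeping (fix the witness $C$ first, then invoke the lemma for that $C$ and $t$, then conclude by contradiction). Correct, and the same approach.
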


Corollary~\ref{cor-liststar} in particular implies that if Conjecture~\ref{conj-chara} holds then $\chi^l_\star(\GG)=\col_\star(\GG)$ for every minor-closed graph class $\GG$, as mentioned in the introduction.

\vskip 10pt

Let $\brm{tw}(G)$ denote the treewidth of the graph $G$.\footnote{Treewidth of a graph is defined in Section~\ref{sec-treedec}.}
We say that a class of graphs $\GG$ is \emph{of bounded treewidth} if there exists an integer $w$ such that $\brm{tw}(G) \leq w$ for every graph $G \in \GG$.
Our first main result  proves Conjecture~\ref{conj-chara} in the special case of minor-closed classes of graphs of bounded tree-width
(or equivalently according to a result of Robertson and Seymour~\cite{RSey}, minor-closed classes that do not contain all planar graphs).

\begin{theorem}\label{thm-bndtw}
Let $\GG$ be a minor-closed class of graphs of bounded treewidth, and let $t\ge 1$ be an integer.  Then $\col_\star(\GG)\le t$ if and only if there exists $m\ge 1$ such that
$K_{t,m}\not\in \GG$ and $I_{t-1}+P_m\not\in\GG$.
\end{theorem}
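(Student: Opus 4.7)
The forward direction is immediate: if $\col_\star(\GG)\le t$ is witnessed by some constant $C$, then by Observation~\ref{obs-basgr} there is an $m$ so that neither $K_{t,m}$ nor $I_{t-1}+P_m$ admits a $t$-island of size at most $C$, and hence both lie outside $\GG$.

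For the backward direction, fix $w$ with $\brm{tw}(G)\le w$ for every $G\in\GG$ and an $m$ with $K_{t,m}, I_{t-1}+P_m\notin\GG$. Given a subgraph $H$ of some $G\in\GG$, the goal is to find a $t$-island of size at most some constant $C=C(w,t,m)$ in $H$. Take a rooted tree decomposition $(T,\beta)$ of $H$ of width at most $w$ and for each $x\in V(T)$ set $V_x=\bigcup_{y\in V(T_x)}\beta(y)$, where $T_x$ is the subtree of $T$ rooted at $x$. The key observation is that if some node $x$ has a child $y$ with $|\beta(x)\cap V_y|\le t-1$, then $V_y\setminus\beta(x)$ is automatically a $t$-island of $H$ of size at most $|V_y|$, because every vertex of this set has all its neighbors inside $V_y$ and at most $|\beta(x)\cap V_y|\le t-1$ of them lie outside $V_y\setminus\beta(x)$.

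Choose $x$ to be a deepest node of $T$ with $|V_x|>C$, so that every child $y$ of $x$ has $|V_y|\le C$. If some child has interface of size at most $t-1$ we are done by the observation. Otherwise every child has interface of size at least $t$; since $\beta(x)$ has at most $w+1$ vertices there are fewer than $2^{w+1}$ possible interfaces, so once $x$ has at least $m\cdot 2^{w+1}$ children, pigeonhole provides $m$ children $y_1,\ldots,y_m$ sharing a common interface $B^*\subseteq\beta(x)$ with $|B^*|\ge t$. After standard preprocessing of the decomposition---for instance replacing it with a lean one and pruning subtrees $y$ with $\beta(y)\subseteq\beta(x)$---the pairwise-disjoint ``inner'' parts $V_{y_i}\setminus B^*$, together with the vertices of $B^*$ acting as a common neighbourhood, yield a $K_{|B^*|,m}$ minor in $H$ and contradict $K_{t,m}\notin\GG$.

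The main obstacle is the complementary case, where $x$ has fewer than $m\cdot 2^{w+1}$ children and so the subtree-pigeonhole fails to fire even when $H$ is huge. The resolution, I expect, is to examine the path in $T$ from the root to $x$ rather than the subtree below it: associating to each bag along this path a ``type'' (which vertices of the previous bag persist, which are forgotten, and which are newly introduced) yields boundedly many types, so a long enough such path must contain many consecutive nodes of the same type. A long block of identical repetitions should force either an $I_{t-1}+P_m$ minor---with the $t-1$ apex vertices coming from the vertices of the bag that persist across the whole block, and the path coming from the repeated ``fresh'' vertices introduced in each bag---or an explicit small $t$-island situated on the path. Balancing the constant $C$ against the number of possible bag types and interfaces so that pigeonhole actually fires, and promoting the resulting ``abstract'' repetition into a genuine $I_{t-1}+P_m$ substructure, is where I expect the heaviest technical work to lie.
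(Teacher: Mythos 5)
Your forward direction is fine, and the ``key observation'' that $V_y\setminus\beta(x)$ is a $t$-island whenever the adhesion $|\beta(x)\cap\beta(y)|\le t-1$ is correct (it plays the same role as the ``internal vertices'' of a bag in the paper's path-decomposition argument). But there are two genuine gaps. First, the $K_{t,m}$ step does not follow from pigeonholing on interfaces alone: having $m$ children $y_1,\dots,y_m$ whose adhesions all equal a set $B^*$ with $|B^*|\ge t$ does \emph{not} give a $K_{t,m}$ minor, since nothing guarantees that each inner part $V_{y_i}\setminus B^*$ is connected, nor that it is adjacent to \emph{every} vertex of $B^*$. You gesture at leanness, but leanness gives disjoint paths between a pair of bags, not the ``star into each subtree'' structure needed here. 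Getting that structure is genuinely where the work lies, and the paper handles it by working with \emph{linked} path decompositions, fixing an explicit linkage $L_z$ through each interior bag, and pigeonholing on how a high-degree internal vertex attaches to the linkage paths (the ``extended bag'' machinery of Lemma~\ref{lemma-main-tw}).

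Second, the complementary case is not merely ``heavy technical work'' left to fill in: the plan of examining the root-to-$x$ path is not clearly the right move, because the deepest node $x$ with $|V_x|>C$ may lie very near the root, with the bulk of $T$ in long thin branches that are not on that path at all. The paper avoids this by first converting the bounded-width tree decomposition into a \emph{proper path decomposition} of large order and bounded adhesion (Lemma~\ref{lemma-tree-to-path}: either some node has high degree and one lines up its subtrees, or $T$ contains a long path), then normalizing the path decomposition to be linked, appearance-universal, and with large interiors (Lemmas~\ref{lemma-linked}--\ref{lemma-lint}); only then does the pigeonhole and minor-extraction of Lemma~\ref{lemma-main-tw} go through. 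Moreover, the whole thing is wrapped in the induction of Lemma~\ref{lemma-comb}, which must produce a $t$-island disjoint from a prescribed sparse set $S$, since the paper actually proves the stronger percolation statement Theorem~\ref{thm-pbndtw}. Your sketch identifies the right target minors and the right qualitative dichotomy, but it omits essentially all of this scaffolding, and the parts you flag as ``I expect'' are where the proof actually happens.
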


Theorem~\ref{thm-bndtw} and Corollary~\ref{cor-liststar} imply the following.

\begin{corollary}\label{cor-liststar2}
If $\GG$ is a minor-closed class of graphs of bounded treewidth, then $\chi^l_\star(\GG) = \col_\star(\GG)$. 
\end{corollary}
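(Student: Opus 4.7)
The plan is a short composition of three ingredients already at hand: Lemma~\ref{lemma-coloring}, Corollary~\ref{cor-liststar}, and Theorem~\ref{thm-bndtw}. I would split the claimed equality into two inequalities and verify each.

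For the direction $\chi^l_\star(\GG)\le\col_\star(\GG)$, no assumption on $\GG$ beyond being a graph class is needed. If $\col_\star(\GG)=\infty$ the inequality is vacuous, so suppose $\col_\star(\GG)\le t$ is witnessed by some $C$, meaning $\col_C(G)\le t$ for every $G\in\GG$. Lemma~\ref{lemma-coloring} applied graph-by-graph gives $\chi^l_C(G)\le\col_C(G)\le t$, and hence the same pair $(t,C)$ witnesses $\chi^l_\star(\GG)\le t$, as required.

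For the reverse direction $\col_\star(\GG)\le\chi^l_\star(\GG)$, I would set $t:=\chi^l_\star(\GG)$, assuming finiteness (otherwise there is again nothing to prove). Feeding this value of $t$ into Corollary~\ref{cor-liststar} yields an integer $m$ with $K_{t,m}\notin\GG$ and $I_{t-1}+P_m\notin\GG$. Because $\GG$ is by hypothesis minor-closed and of bounded treewidth, Theorem~\ref{thm-bndtw} applies with the \emph{same} $t$ and produces $\col_\star(\GG)\le t=\chi^l_\star(\GG)$.

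I do not expect a genuine obstacle here: the hard work is carried out by Theorem~\ref{thm-bndtw} (the characterization in bounded treewidth) and by Lemma~\ref{lem-listksm} (which powers Corollary~\ref{cor-liststar}). The only point requiring a moment's care is that the integer $t$ supplied to Corollary~\ref{cor-liststar} must be exactly the one used when invoking Theorem~\ref{thm-bndtw}; this alignment is automatic from how $t$ is defined. Thus the proof is essentially a bookkeeping combination of the two main theorems of the paper.
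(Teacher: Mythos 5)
Your proof is correct and matches the paper's intended argument exactly: the paper states that Corollary~\ref{cor-liststar2} follows from Theorem~\ref{thm-bndtw} and Corollary~\ref{cor-liststar}, with the inequality $\chi^l_\star(\GG)\le\col_\star(\GG)$ already noted after Lemma~\ref{lemma-coloring}. Your careful tracking of the integer $t$ across the two ingredients is precisely the bookkeeping the paper leaves implicit.
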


Theorem~\ref{thm-bndtw} can be applied to bound the clustered chromatic number of minor-closed classes of unbounded treewidth. The key tool which allows such an application is the following theorem of DeVos et al.

\begin{theorem}[DeVos  et al.~\cite{DDOSRSV}]\label{thm-devos} For every minor-closed class $\GG$ there exists an integer $w$, 
such that for every graph $G \in \GG$ there exists a partition $V_1,V_2$ of $V(G)$ satisfying $\brm{tw}(G[V_1]) \leq w$
and $\brm{tw}(G[V_2]) \leq w$.
\end{theorem}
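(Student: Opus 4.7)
The plan is to invoke the Robertson--Seymour graph minor structure theorem and to handle each almost-embeddable piece by a breadth-first layering. By the structure theorem, there exist integers $a$, $g$, $p$ depending only on $\GG$ such that every $G\in\GG$ admits a tree decomposition in which each torso is obtained from a graph $H$ drawn in a surface of Euler genus at most $g$ with at most $p$ vortices of depth at most $p$, by adjoining at most $a$ apex vertices. My goal is to build, for each torso, a partition of its vertex set into two parts $U_1,U_2$ of treewidth bounded in terms of $a,g,p$, and then glue these partitions along the decomposition.

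For a single torso I would first place all apex vertices into $U_1$; they contribute at most $a$ to $\brm{tw}(G[U_1])$. For the surface part I would run a BFS from an arbitrary root, obtaining layers $L_0,L_1,\dots$, pick a constant $k=k(g)$, and assign $L_i$ to $U_1$ if $\lfloor i/k\rfloor$ is even and to $U_2$ otherwise. Since no edge of $H$ joins non-adjacent layers, each color class is a disjoint union of subgraphs supported on $k$ consecutive layers; the standard bound that a bounded-genus graph of BFS-radius $k$ has treewidth $O(gk)$ then bounds $\brm{tw}(H[U_i])$. Vortices are absorbed into $U_1$ along with the apex vertices: each has only $p$ attachment vertices to the embedded part and internal pathwidth at most $p$, so the resulting contribution to $\brm{tw}(G[U_1])$ is still bounded.

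Finally, I would combine the local partitions into a global one. Adjacent bags of the tree decomposition intersect in a separator of size bounded in terms of $a,g,p$, and in each torso this separator appears as a clique. Rooting the decomposition tree and processing it top-down, whenever a child bag's local partition disagrees with the already-chosen coloring of the shared separator, I would flip the disagreeing separator vertices' colors in that bag; this changes only a bounded number of vertices per bag and inflates the treewidth of each color class by a constant. The resulting global partition $V_1,V_2$ satisfies $\brm{tw}(G[V_1]),\brm{tw}(G[V_2])\le w$ for some $w$ depending only on $\GG$.

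The main obstacle I anticipate is the last step: the local partitions constructed in different torsos are produced independently from unrelated BFS trees, and a separator vertex may be an apex in one bag but lie deep in the BFS layering of an adjacent bag, so the bounded-size recoloring must be propagated without accumulating error down a long path in the decomposition tree. Controlling this propagation—so that only $O(1)$ adjustments per bag survive and no color class loses its bounded-treewidth structure—is the delicate part; everything else is a direct application of the structure theorem and Robertson--Seymour's bounded-radius planar-width bound, adapted to bounded genus.
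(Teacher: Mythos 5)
The paper cites this theorem of DeVos et al.\ as a black box and gives no proof of its own, so there is no in-paper argument to compare against; you are effectively reproving the cited result. Your sketch follows the same overall strategy as the original source (structure theorem, BFS layering of the surface part, gluing along the tree decomposition), but two steps as written do not go through.

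First, the vortex handling rests on a false premise. A vortex of depth at most $p$ attaches to the embedded part along an entire face boundary: it has a path decomposition of width at most $p$ with one bag per boundary vertex, and the number of boundary vertices is unbounded, not at most $p$. Dumping the vortex into $U_1$ therefore does not automatically keep $\brm{tw}(G[U_1])$ bounded, because the long face boundary to which the vortex attaches runs across many BFS layers, and the portions of that boundary falling in $U_1$, taken together with the vortex, can produce large grid minors inside $U_1$. One must split the vortex in step with the surface layering, aligning its path decomposition with the BFS bands; your argument does not supply this coordination.

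Second, the gluing step is where the genuine gap lies, and you say as much, but the proposed remedy does not work as stated. If you flip the colors of separator vertices in a child bag to match the parent, those vertices are no longer in BFS layers of the color they now carry, so the local certificate ``each color class is a union of subgraphs induced by $O(k)$ consecutive layers'' is destroyed, and the bounded-radius argument no longer bounds the treewidth of the child's color classes. The damage is not a constant that can be absorbed: after a flip you would need to re-derive the child's entire partition subject to the fixed separator colors, which is exactly the difficulty the scheme was meant to avoid, and the same problem recurs in every descendant bag. A correct argument has to construct the partition globally along the tree decomposition, choosing the roots and layer offsets so that separators land in controlled positions, rather than 2-coloring each torso independently and patching afterwards.
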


For ordinary (not list) clustered coloring, one can use disjoint sets of colors on parts $V_1$ and $V_2$ of
such a partition.
Hence, combining Theorems~\ref{thm-bndtw} and \ref{thm-devos} and Lemma~\ref{lemma-coloring} yields the following.

\begin{corollary}\label{cor-double} 
Let $\GG$ be a minor-closed class of graphs, and let $t,m\ge 1$ be integers such that
$K_{t,m}\not\in \GG$ and $I_{t-1}+P_m\not\in\GG$. Then $\chi_\star(\mc{G}) \leq 2t$. 

In particular, \begin{equation}\label{ineq-kst}
\chi_\star(\mc{X}(K_{t,m})) \leq 2t+2
\end{equation} and 
\begin{equation}\label{ineq-kt}
\chi_\star(\mc{X}(K_{t+1})) \leq 2t.
\end{equation}
\end{corollary}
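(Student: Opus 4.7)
The plan is to combine Theorem~\ref{thm-devos}, Theorem~\ref{thm-bndtw}, and Lemma~\ref{lemma-coloring} essentially mechanically, using two disjoint palettes on the two sides of the bounded-treewidth partition. First, I would obtain the constant $w$ from Theorem~\ref{thm-devos} applied to $\GG$, and let $\GG'$ denote the subclass of $\GG$ consisting of graphs of treewidth at most $w$. Since treewidth is minor-monotone and $\GG$ is minor-closed, $\GG'$ is also minor-closed; it has bounded treewidth and still excludes $K_{t,m}$ and $I_{t-1}+P_m$. Theorem~\ref{thm-bndtw} therefore yields $\col_\star(\GG')\le t$, and Lemma~\ref{lemma-coloring} upgrades this to a single constant $C$ such that $\chi_C(H)\le t$ for every $H\in\GG'$.

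Next, for an arbitrary $G\in\GG$, I would invoke Theorem~\ref{thm-devos} to partition $V(G)=V_1\cup V_2$ with $\brm{tw}(G[V_i])\le w$, so that each $G[V_i]$ belongs to $\GG'$ (being a subgraph of $G\in\GG$). I would then color $G[V_1]$ from the palette $\{1,\dots,t\}$ and $G[V_2]$ from the disjoint palette $\{t+1,\dots,2t\}$, each with clustering at most $C$. Because the two palettes are disjoint, every monochromatic connected subgraph of $G$ lies entirely in one of $V_1,V_2$ and therefore has at most $C$ vertices, giving $\chi_C(G)\le 2t$ and hence $\chi_\star(\GG)\le 2t$.

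For~(\ref{ineq-kst}), I would apply the main bound to $\GG=\mc{X}(K_{t,m})$ with parameter $t+1$: $K_{t+1,m}$ contains $K_{t,m}$ already as a subgraph, and the $t$ apex vertices of $I_t+P_m$ together with $m$ path vertices form a $K_{t,m}$ subgraph, so both forbidden graphs lie outside $\mc{X}(K_{t,m})$. For~(\ref{ineq-kt}), I would apply the main bound to $\GG=\mc{X}(K_{t+1})$ with parameter $t$ and any $m'\ge t+1$: in $K_{t,m'}$, pairing a left vertex with a right vertex in each of $t$ branch sets and using one spare right vertex as the $(t+1)$-st branch set exhibits $K_{t+1}$ as a minor; in $I_{t-1}+P_{m'}$, pairing each of the $t-1$ apex vertices with a distinct path vertex and then taking two further consecutive path vertices as the remaining two branch sets does the same. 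There is no real obstacle: once $\GG'$ is recognized as a minor-closed subclass of bounded treewidth, the disjoint-palettes trick immediately combines the two half-colorings, and the minor constructions for the two applications are routine.
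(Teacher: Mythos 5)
Your proposal is correct and follows exactly the route the paper sketches: apply Theorem~\ref{thm-devos} to split $V(G)$ into two bounded-treewidth parts, apply Theorem~\ref{thm-bndtw} together with Lemma~\ref{lemma-coloring} to $t$-color each part with bounded clustering, and use disjoint palettes to combine; the minor constructions for the two displayed inequalities are also the intended ones. Nothing to add.
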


As mentioned in the introduction, a different proof of the bound (\ref{ineq-kt}) is given by van den Heuvel and Wood~\cite{vdHW17}, while (\ref{ineq-kst}) improves on the bound $\chi_\star(\mc{X}(K_{t,m})) \leq 3t$ established in~\cite{vdHW17}.

\vskip 10pt

Our second main result bounds the clustered coloring number of minor-closed classes of graphs in terms of the maximum density of the class.

\begin{theorem}\label{thm-sparse}
For every graph $H$, integer $t \geq 1$  and  real $\alpha > 0$ there exists $C > 0$ satisfying the following. Let $G$ be an $H$-minor-free graph such that $|E(G)|< (t-\alpha)|V(G)|$. Then $G$ contains a $t$-island of size at most $C$.  
\end{theorem}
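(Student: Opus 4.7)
The plan is to prove this by contradiction using a separator argument for $H$-minor-free graphs combined with the density slack. I begin with an easy reduction: if $G$ contains a vertex $v$ with $\deg_G(v) < t$, then $\{v\}$ is itself a $t$-island of size $1$ and we are done. So we may assume $\delta(G) \geq t$, which together with $|E(G)| < (t-\alpha)|V(G)|$ forces $\alpha < t/2$. In this non-trivial regime, every vertex has degree at least $t$ yet the average degree is strictly less than $2(t-\alpha)$, leaving a density slack proportional to $\alpha$ that we must exploit.

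The main tool I would apply is the Alon--Seymour--Thomas balanced separator theorem: every $H$-minor-free graph on $n$ vertices admits a separator $X$ of size $O_H(\sqrt{n})$ whose removal leaves components of size at most $\tfrac{2}{3}n$. Iterating this decomposition produces a hierarchical partition of $V(G)$ into a ``skeleton'' of bounded separator mass together with pieces $P_1,\ldots,P_k$ of controlled size. Because $G$ is globally sparse with slack $\alpha$, summing the edge inequality over pieces produces some piece $P$ whose induced subgraph $G[P]$ has density strictly less than $t - \alpha/2$, and hence contains a vertex $v$ of small degree inside $G[P]$.

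To convert this low-degree vertex $v$ into a $t$-island $S$ of the full graph $G$, I would grow $S \supseteq \{v\}$ by iteratively absorbing the ``escape'' neighbors of its current members, namely those lying in the separator skeleton or in adjacent pieces. Since the separator mass crossing $P$ is small (and depends only on $H$ and $\alpha$) and the number of absorption levels that must be traversed is bounded, the process terminates with $|S| \leq C$ for some $C = C(H,t,\alpha)$, and by construction each vertex of $S$ has fewer than $t$ neighbors in $V(G) \setminus S$.

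The main obstacle lies in this absorption step: separator methods naturally yield only bulk (edge-count) sparse-cut bounds, whereas the $t$-island condition requires an individual per-vertex bound on outside degree. Upgrading bulk to per-vertex demands absorbing all ``heavy'' escape vertices into $S$ without its size blowing up. This is precisely where the slack $\alpha > 0$ combines with $H$-minor-freeness: sparsity caps how many heavy vertices must be absorbed per level, while $H$-minor-freeness bounds how densely these absorbed vertices can be interconnected through the skeleton, keeping $|S|$ uniformly bounded. A secondary issue is that the separator constant and the depth needed to reach a low-density piece both depend on $H$ and $\alpha$, explaining why the final $C$ depends on all three parameters $H$, $t$, and $\alpha$ as in the statement.
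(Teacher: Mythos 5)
Your high-level framework matches the paper's: the proof of Theorem~\ref{thm-sparse} (via Theorem~\ref{thm-percolgen2}) indeed starts from the Alon--Seymour--Thomas separator theorem and iterates it (Lemma~\ref{lemma-bdedcomp}, a Lipton--Tarjan argument) to find, for any $\varepsilon>0$, a set $X$ of at most $\varepsilon n$ vertices whose removal leaves components of size at most some constant $C=C(\varepsilon)$. Up to this point you and the paper agree.

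The gap is in how a sparse piece is converted into a $t$-island, and it is a genuine one, not just a matter of detail. First, your intermediate claim is already wrong: if a piece $P$ has $|E(G[P])|<(t-\alpha/2)|P|$, the best you can extract by averaging is a vertex of degree $<2t-\alpha$ \emph{inside} $G[P]$; that vertex can easily have degree $\geq t$ within $P$ and even higher once its neighbors in $X$ are counted, so you do not get a low-degree vertex to seed your set $S$. Second, the ``absorption'' process you describe has no termination mechanism: absorbing escape neighbors pulls you into $X$, from $X$ into other pieces, and nothing in the argument caps this cascade. Your own diagnosis --- that a bulk edge-count bound must be upgraded to a per-vertex bound --- is correct, but neither the sparsity slack nor $H$-minor-freeness accomplishes this in the way you suggest, and in fact $H$-minor-freeness plays no role at all after the separator step.

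The paper's missing ingredient is the notion of a \emph{$t$-enclave} and a minimality argument that replaces your growth process. For $A\subseteq V(G)$, let $e_G(A)$ count the edges of $G$ with at least one endpoint in $A$ (so edges from $A$ into $X$ are included). Call $A$ a $t$-enclave if $e_G(A)<t|A|$. Lemma~\ref{lemma-enclave} shows that any \emph{minimal} $t$-enclave $S$ is automatically a $t$-island: if some $v\in S$ had at least $t$ neighbors outside $S$, then $e_G(S\setminus\{v\})\le e_G(S)-t<t|S|-t=t|S\setminus\{v\}|$, so $S\setminus\{v\}$ would be a smaller $t$-enclave. This one-line argument is exactly the bulk-to-pointwise upgrade you were trying to engineer, and because $e_G(\cdot)$ already charges every edge leaving $A$, there is no need to absorb anything from $X$; the island stays inside a single component. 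The proof then closes by observing that if no component $K$ of $G-X$ were a $t$-enclave, then $|E(G)|\ge\sum_K e_G(K)\ge t\sum_K|K|\ge t(1-\varepsilon)n$, which with $\varepsilon<\alpha/t$ contradicts $|E(G)|<(t-\alpha)n$. Any enclave component has size at most $C$ and hence contains a $t$-island of size at most $C$. Your proposal would need this enclave-and-minimality idea (or an equivalent) substituted for both the ``low-degree vertex'' step and the ``absorption'' step.
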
 

Theorem~\ref{thm-sparse} immediately implies the following.

\begin{corollary}\label{cor-sparse}
Let $\GG$ be a class of graphs closed under taking subgraphs, such that $\GG \subseteq \mc{X}(H)$ for some graph $H$. Let real $d > 0$ be such that $|E(G)|\leq d|V(G)|+o(|V(G)|)$ for all graphs $G\in\GG$. Then $\col_\star(\mc{G}) \leq  \lfloor d+1 \rfloor$.
\end{corollary}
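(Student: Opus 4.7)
The plan is to set $t = \lfloor d+1 \rfloor$ and reduce directly to Theorem~\ref{thm-sparse}. Since $\lfloor d+1 \rfloor > d$ for every real $d$, the quantity $\delta := t - d$ is strictly positive. First, invoke Theorem~\ref{thm-sparse} with the forbidden minor $H$, this integer $t$, and the choice $\alpha := \delta/3$; this produces a constant $C_0 = C_0(H,t,\alpha)$ such that every $H$-minor-free graph $G'$ with $|E(G')| < (t-\alpha)|V(G')|$ contains a $t$-island of size at most $C_0$.

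Next, use the uniform density hypothesis $|E(G)| \leq d|V(G)| + o(|V(G)|)$ across $\GG$ to pick an integer $N$ such that every $G' \in \GG$ with $|V(G')| \geq N$ satisfies $|E(G')| \leq (d+\alpha)|V(G')|$. Since $\alpha < \delta/2$, we have $d + \alpha < t - \alpha$, and hence $|E(G')| < (t-\alpha)|V(G')|$ whenever $|V(G')| \geq N$, which is exactly the hypothesis of Theorem~\ref{thm-sparse} for the integer $t$.

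Now, given any $G \in \GG$, let $G'$ be a non-empty subgraph of $G$. Since $\GG$ is closed under taking subgraphs, $G' \in \GG \subseteq \mc{X}(H)$, so $G'$ is $H$-minor-free and satisfies the density bound. If $|V(G')| \geq N$, Theorem~\ref{thm-sparse} supplies a $t$-island of $G'$ of size at most $C_0$; if $|V(G')| < N$, then $V(G')$ itself is a $t$-island of $G'$ of size less than $N$, because each vertex has zero neighbors outside $V(G')$. Setting $C := \max(C_0, N-1)$ then gives $\col_C(G) \leq t$ for every $G \in \GG$, proving $\col_\star(\GG) \leq \lfloor d+1 \rfloor$.

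There is no genuine obstacle here: the corollary is essentially a repackaging of Theorem~\ref{thm-sparse}, converting its pointwise $(t-\alpha)|V|$ edge bound into the asymptotic $d|V| + o(|V|)$ language of the statement. The only minor care needed is choosing $\alpha$ small enough that the $o$-term is absorbed with room to spare, and handling the finitely many ``too small'' subgraphs via the trivial observation that the full vertex set of a graph is always a $t$-island of itself.
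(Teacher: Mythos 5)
Your proof is correct, and it is precisely the reduction the paper has in mind when it says Theorem~\ref{thm-sparse} ``immediately implies'' the corollary: choose $t=\lfloor d+1\rfloor$, exploit the slack $t-d>0$ to absorb the $o(|V(G)|)$ term, apply Theorem~\ref{thm-sparse} to every sufficiently large subgraph (which lies in $\GG$ by subgraph-closure, hence is $H$-minor-free and obeys the density bound), and dispose of the finitely many small cases by noting that the whole vertex set is trivially a $t$-island. The bookkeeping with $\alpha=\delta/3$, $N$, and $C=\max(C_0,N-1)$ is all sound.
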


For a surface $\Sigma$ let $\mc{G}(\Sigma)$ denote the (minor-closed) class of graphs embeddable on $\Sigma$. 
Kleinberg, Motwani,
Raghavan, and Venkatasubramanian~\cite{KMRV97} and, independently, Alon, Ding, Oporowski and Vertigan~\cite{ADOV03} proved that $\chi_\star(\mc{G}(\Sigma)) \geq 4$ for every surface $\Sigma$. Thus $\col_\star(\mc{G}(\Sigma)) \geq 4$. By Euler's formula, for every surface $\Sigma$ there exists a constant $c(\Sigma)$ such that $|E(G)| \leq 3|V(G)|+c(\Sigma)$ for all graphs $G\in\GG(\Sigma)$. Thus Corollary~\ref{cor-sparse} implies that the above lower bounds on 
  $\col_\star(\mc{G}(\Sigma))$ and $\chi_\star(\mc{G}(\Sigma)) \geq 4$ are  tight, as conjectured by Esperet and Ochem~\cite[Conjecture 3]{EspOch16}.
  
\begin{corollary}\label{cor-surface} For every surface $\Sigma$
$$\chi_\star(\mc{G}(\Sigma)) = \chi^l_\star(\mc{G}(\Sigma)) =\col_\star(\mc{G}(\Sigma)) = 4.$$ 
\end{corollary}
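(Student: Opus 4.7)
The proof is essentially an assembly of pieces already developed in the paper plus a classical lower bound from the literature, so the plan is short.

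First I would establish the lower bound. The cited results of Kleinberg, Motwani, Raghavan, and Venkatasubramanian, and independently Alon, Ding, Oporowski, and Vertigan, give $\chi_\star(\mc{G}(\Sigma))\ge 4$ for every surface $\Sigma$. Combining this with Lemma~\ref{lemma-coloring} (which gives $\col_\star(\GG)\ge \chi^l_\star(\GG)\ge \chi_\star(\GG)$ for every graph class $\GG$), I get
$$\col_\star(\mc{G}(\Sigma))\ge \chi^l_\star(\mc{G}(\Sigma))\ge \chi_\star(\mc{G}(\Sigma))\ge 4.$$

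Next I would establish the matching upper bound $\col_\star(\mc{G}(\Sigma))\le 4$. For this I use two standard facts about $\mc{G}(\Sigma)$: it is minor-closed, and it is contained in $\mc{X}(H)$ for $H=K_n$ with $n$ large enough that $K_n$ does not embed in $\Sigma$. By Euler's formula, there is a constant $c(\Sigma)$ such that every $G\in \mc{G}(\Sigma)$ satisfies $|E(G)|\le 3|V(G)|+c(\Sigma)$, in particular $|E(G)|\le 3|V(G)|+o(|V(G)|)$. Applying Corollary~\ref{cor-sparse} with $d=3$ yields $\col_\star(\mc{G}(\Sigma))\le \lfloor 3+1\rfloor = 4$.

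Combining the two bounds sandwiches all three quantities: $4\le \chi_\star(\mc{G}(\Sigma))\le \chi^l_\star(\mc{G}(\Sigma))\le \col_\star(\mc{G}(\Sigma))\le 4$, and equality throughout yields the claimed $\chi_\star(\mc{G}(\Sigma)) = \chi^l_\star(\mc{G}(\Sigma)) = \col_\star(\mc{G}(\Sigma)) = 4$.

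There is no real obstacle here; the nontrivial content has been pushed into Theorem~\ref{thm-sparse} and the quoted lower bound. The only mild subtlety is checking that Corollary~\ref{cor-sparse} applies to $\mc{G}(\Sigma)$: one needs $\mc{G}(\Sigma)$ to be closed under subgraphs (immediate, since subgraphs of embeddable graphs are embeddable) and contained in $\mc{X}(H)$ for some $H$ (take $H$ a complete graph not embeddable in $\Sigma$, which exists since $\Sigma$ has bounded genus), so the hypotheses of Corollary~\ref{cor-sparse} are trivially met.
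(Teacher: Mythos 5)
Your proposal is correct and matches the paper's argument exactly: lower bound from the cited Kleinberg et al.\ / Alon et al.\ result plus the chain $\col_\star\ge\chi^l_\star\ge\chi_\star$, and upper bound from Euler's formula $|E(G)|\le 3|V(G)|+c(\Sigma)$ fed into Corollary~\ref{cor-sparse}. The extra care you take in verifying the hypotheses of Corollary~\ref{cor-sparse} is a nice touch but does not change the route.
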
  

Let $\mc{G}_5(\Sigma)$ be the class of graphs with girth at least five which are embeddable on $\Sigma$. The question of determining  $\chi_\star(\mc{G}_5(\mathbb{R}^2))$ was considered, in particular, in~\cite{AUW17}, and was until now open.
 Euler's formula again gives us a density bound $|E(G)| \leq 5/3|V(G)|+c(\Sigma)$ for some $c(\Sigma)$ and all $G\in\mc{G}_5(\Sigma)$, and thus Corollary~\ref{cor-sparse} implies the following bound on $\col_\star(\mc{G}(\Sigma))$ conjectured in~\cite[Conjecture 7]{EspOch16}. 
 
\begin{corollary}\label{cor-girth} For every surface $\Sigma$
$$\chi_\star(\mc{G}_5(\Sigma)) = \chi^l_\star(\mc{G}_5(\Sigma)) = \col_\star(\mc{G}_5(\Sigma)) = 2.$$ 
\end{corollary}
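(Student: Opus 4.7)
The plan is to obtain both bounds essentially for free from results already stated in the paper. The lower bound $\chi_\star(\mc{G}_5(\Sigma)) \ge 2$ is immediate: the class $\mc{G}_5(\Sigma)$ contains every cycle of length at least five (all such cycles are planar and have girth exactly five), and therefore contains connected graphs of arbitrarily large order. For any such graph $G$ the unique $1$-coloring has a monochromatic component of size $|V(G)|$, so no constant $C$ can work. Hence $\chi_\star(\mc{G}_5(\Sigma)) \ge 2$.

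For the upper bound I would apply Corollary~\ref{cor-sparse} with $d = 5/3$. To verify its hypotheses: the class $\mc{G}_5(\Sigma)$ is closed under taking subgraphs, because deleting vertices or edges preserves both girth at least five and embeddability on $\Sigma$. Moreover $\mc{G}_5(\Sigma) \subseteq \mc{G}(\Sigma) \subseteq \mc{X}(K_n)$ for any $n$ large enough that $K_n$ does not embed on $\Sigma$; such an $n$ exists for every surface. Finally, as noted in the paragraph preceding the corollary, Euler's formula applied to a graph of girth at least five embedded on $\Sigma$ yields $|E(G)| \le (5/3)|V(G)| + c(\Sigma)$, which is of the required form $(5/3)|V(G)| + o(|V(G)|)$. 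Corollary~\ref{cor-sparse} then delivers $\col_\star(\mc{G}_5(\Sigma)) \le \lfloor 5/3 + 1 \rfloor = 2$.

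Combining this with the chain $\col_\star(\mc{G}_5(\Sigma)) \ge \chi^l_\star(\mc{G}_5(\Sigma)) \ge \chi_\star(\mc{G}_5(\Sigma)) \ge 2$ (the first inequality from Lemma~\ref{lemma-coloring}, the second trivial, the last established above) pins all three quantities to $2$. There is no substantive obstacle: the argument exactly parallels the deduction of Corollary~\ref{cor-surface} from the general density bound $|E(G)| \le 3|V(G)| + c(\Sigma)$, and the only nonformal input—the Euler-type density bound for girth-five graphs on a surface—is standard.
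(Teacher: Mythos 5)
Your argument is correct and is exactly the one the paper intends: apply Corollary~\ref{cor-sparse} with $d=5/3$ (the density bound from Euler's formula for girth-five graphs on $\Sigma$) to get $\col_\star(\mc{G}_5(\Sigma))\le\lfloor 5/3+1\rfloor=2$, and combine with the trivial lower bound $\chi_\star\ge 2$ and the chain $\col_\star\ge\chi^l_\star\ge\chi_\star$. One trivial slip: a cycle $C_n$ with $n\ge 5$ has girth $n$, not ``girth exactly five,'' but since what is needed is girth at least five the argument is unaffected.
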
  

Corollary~\ref{cor-sparse} can also be used to determine $\col_\star(\mc{X}(K_t))$ for $t \leq 9$. By the results of ~\cite{Dirac64,Jorgensen94,Mader68,SonTho06} if $G$ is a $K_t$-minor free graph for some $t \leq 9$ then $|E(G)| \leq (t-2)|V(G)|$. This implies the following.

\begin{corollary}\label{cor-smallkt} Let $1 \leq t \leq 9$ be an integer. Then $$\chi_\star(\mc{X}(K_t)) = \chi^l_\star(\mc{X}(K_t)) =\col_\star(\mc{X}(K_t)) = t-1.$$ 
\end{corollary}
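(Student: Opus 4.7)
The plan is to combine Corollary~\ref{cor-sparse} with the cited density bounds to control $\col_\star(\mc{X}(K_t))$ from above, and then to use Observation~\ref{obs-basgr} and Lemma~\ref{lem-listksm} applied to a suitable family in $\mc{X}(K_t)$ to supply the matching lower bounds on $\col_\star$ and $\chi^l_\star$. Since $\mc{X}(K_t)$ is minor-closed, and hence closed under taking subgraphs, and the cited theorems of Dirac, Mader, J{\o}rgensen and Song--Thomas give $|E(G)|\le (t-2)|V(G)|$ for every $G\in\mc{X}(K_t)$ with $1\le t\le 9$, applying Corollary~\ref{cor-sparse} with $d=t-2$ yields $\col_\star(\mc{X}(K_t))\le\lfloor d+1\rfloor=t-1$, and Lemma~\ref{lemma-coloring} propagates this to the matching upper bounds $\chi_\star(\mc{X}(K_t))\le\chi^l_\star(\mc{X}(K_t))\le t-1$.

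For the matching lower bounds on $\col_\star$ and $\chi^l_\star$, the family $\{I_{t-3}+P_m\}_m$ works. The Hadwiger number of $I_{t-3}+P_m$ is exactly $(t-3)+2=t-1$: the $t-3$ stable-set vertices, each augmented with a path vertex to achieve mutual adjacency through the path, yield $t-3$ pairwise adjacent branch sets, and at most two additional pairwise adjacent, pairwise disjoint branch sets can be extracted from the remaining path (any third disjoint subpath is separated from one of the other two and so fails to be adjacent to it). Hence $K_t$ is not a minor and $I_{t-3}+P_m\in\mc{X}(K_t)$ for every $m$. Applying Observation~\ref{obs-basgr} with parameter $t-2$ in the role of $t$ then gives $\col_C(I_{t-3}+P_m)\ge t-1$ for $m$ sufficiently large relative to $C$, whence $\col_\star(\mc{X}(K_t))\ge t-1$; the same argument with Lemma~\ref{lem-listksm} replacing Observation~\ref{obs-basgr} yields $\chi^l_\star(\mc{X}(K_t))\ge t-1$.

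The main obstacle is the remaining lower bound $\chi_\star(\mc{X}(K_t))\ge t-1$: the family $I_{t-3}+P_m$ does not witness it, because assigning a single colour to the stable set and cycling through the other $t-3$ colours along the path produces a proper $(t-2)$-colouring with singleton monochromatic components. An independent construction is therefore required. The bound is trivial for $t\le 4$; for $t=5$ it follows from the planar lower bound $\chi_\star(\mc{G}(\mathbb{R}^2))\ge 4$ due to Kleinberg--Motwani--Raghavan--Venkatasubramanian and Alon--Ding--Oporowski--Vertigan, since planar graphs are $K_5$-minor-free. For $6\le t\le 9$, one can iteratively boost this bound by joining an apex vertex to a graph formed from many disjoint copies of a witness graph at level $t-1$, where the abundance of copies makes the clustered chromatic number robust under bounded vertex deletions and so the apex forces at least one new colour; the join preserves $K_t$-minor-freeness because Hadwiger numbers add under a join. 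Alternatively, one can invoke the general result $\chi_\star(\mc{X}(K_t))=t-1$ proved in the sequel to this paper. Combining the three bounds yields the claimed equalities.
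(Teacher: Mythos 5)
Your upper bound matches the paper exactly: the density bound $|E(G)|\le(t-2)|V(G)|$ for $K_t$-minor-free graphs with $t\le 9$ combined with Corollary~\ref{cor-sparse} gives $\col_\star(\mc{X}(K_t))\le t-1$, and the chain $\chi_\star\le\chi^l_\star\le\col_\star$ (via Lemma~\ref{lemma-coloring}) propagates this. Your lower-bound analysis is also correct, including the observation that $I_{t-3}+P_m$ has Hadwiger number exactly $t-1$ and so lies in $\mc{X}(K_t)$; Observation~\ref{obs-basgr} and Lemma~\ref{lem-listksm} then give $\col_\star,\chi^l_\star\ge t-1$.

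Where you go beyond the paper is in recognizing and filling the gap for the ordinary clustered chromatic number: as you correctly note, $I_{t-3}+P_m$ is properly $3$-colorable for $t\ge 5$ and hence cannot witness $\chi_\star\ge t-1$. The paper itself does not spell this out; it only cites the $\chi_\star(\mc{G}(\Sigma))\ge 4$ lower bound of Kleinberg--Motwani--Raghavan--Venkatasubramanian and Alon--Ding--Oporowski--Vertigan (which covers $t=5$) and otherwise treats $\chi_\star(\mc{X}(K_t))\ge t-1$ as implicit or known. Your recursive apex construction --- join a universal vertex to sufficiently many disjoint copies of a level-$(t-1)$ witness, noting that the apex's color can appear on at most $C-1$ vertices of the union and so some untouched copy must be $(t-3)$-colored, while Hadwiger numbers add under join so $K_t$-minor-freeness is preserved --- is the standard way to obtain this lower bound, and it works for all $t\ge 2$ (not just $t\le 9$), making the restriction to $t\le 9$ matter only for the upper bound via the density results of Dirac, Mader, J{\o}rgensen, and Song--Thomas. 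Your alternative appeal to the sequel paper's $\chi_\star(\mc{X}(K_t))=t-1$ is also valid. In short, the proposal is correct and takes the same route as the paper for the upper bound while being more careful and self-contained about the $\chi_\star$ lower bound.
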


Finally, let us discuss a relationship to another concept, \emph{bootstrap percolation}.  Consider the following process on a graph $G$ for some integer $t\ge 0$.  Let vertices of some set $A_0\subseteq V(G)$
be marked active.  If there exists an inactive vertex $v$ in $G$ with at least $t$ active neighbors, $v$ becomes active.  We repeat this procedure until there are no more inactive vertices
with at least $t$ active neighbors.  If at the end, all vertices of $G$ are active, we say that the set $A_0$ \emph{$t$-percolates}. Bootstrap percolation was introduced by Chalupa, Leath and Reich~\cite{CLR79} as a simplification of existing models of ferromagnetism. Extremal problems for bootstrap percolation, similar to the ones we consider in this paper, were studied for very structured graph families e.g. in~\cite{BalBol06,MorNoe15}. 

For $\varepsilon>0$, we say that a graph $G$ is \emph{$\varepsilon$-resistant to $t$-percolation} if no set of at most $\varepsilon|V(G)|$ vertices of $G$ $t$-percolates.
For a class of graphs $\GG$, let us define the \emph{percolation threshold $p(\GG)$}
of $\GG$ to be the minimum integer $t$ such that for some $\varepsilon>0$, all non-null graphs in $\GG$
are $\varepsilon$-resistant to $t$-percolation ($p(\GG)=\infty$ if no such $t$ exists).  Clearly, all graphs in $\GG$ are also $\varepsilon$-resistant to $t'$-percolation for every $t'\ge t$.

How to show that a class is percolation resistant?  Observe that a set $t$-percolates if and only if no $t$-island is contained in its complement.  If a graph $G$ contains more than $\varepsilon |V(G)|$
pairwise disjoint $t$-islands, then each set of size at most $\varepsilon|V(G)|$ is disjoint from at least one of them, and thus it does not $t$-percolate.  Hence, the following notion gives an upper bound to
the percolation threshold.  Let the \emph{pervasive clustered coloring number $\text{pcol}_\star(\GG)$} denote the minimum integer $t$ such that for some $\varepsilon>0$, each graph $G\in\GG$ contains at least $\varepsilon |V(G)|$ pairwise disjoint $t$-islands
($\text{pcol}_\star(\GG)=\infty$ if no such $t$ exists).

If $G$ contains linearly many pairwise disjoint $t$-islands, some of them must have constant size.  Hence, for a subgraph-closed class $\GG$ we have inequalities $\text{pcol}_\star(\GG)\ge p(\GG)$ and $\text{pcol}_\star(\GG)\ge \col_\star(\GG)$.
In general, percolation threshold may be smaller than the clustered coloring number; e.g., for $t\ge 3$ the class of graphs $G$ with maximum degree at most $t$ and girth $\Omega(\log(|V(G)|))$
has clustered coloring number $t$ (since in $t$-regular graphs in the class, all $(t-1)$-islands must contain cycles) but all the graphs in this class are $(t-2)/(2t-2)$-resistant to $(t-1)$-percolation
(since the complements of sets of size at most $\tfrac{t-2}{2t-2}|V(G)|$ contain a vertex of degree at most $t-2$, or two vertices of degree $t-1$ joined by a path, or a cycle, forming a $(t-1)$-island).

However, in Section~\ref{sec-percolation} we prove the following.
 
\begin{theorem}\label{thm-percol1}
	Let $\GG$ be a class of graphs closed under taking subgraphs, such that  $\GG \subseteq \mc{X}(H)$ for some graph $H$.  Then $\text{pcol}_\star(\GG)= p(\GG)$.
\end{theorem}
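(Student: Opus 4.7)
The plan is to prove the nontrivial direction $\text{pcol}_\star(\GG)\le p(\GG)$; the reverse inequality is noted in the text. Set $t = p(\GG)$ and fix $\varepsilon_0 > 0$ such that every $G \in \GG$ is $\varepsilon_0$-resistant to $t$-percolation. For $G \in \GG$ with $n = |V(G)|$, the idea is to greedily construct pairwise disjoint $t$-islands $S_1, S_2, \ldots$ of $G$, each of size at most a constant $C = C(H, t, \varepsilon_0)$, maintaining the invariant $|U_j| := |S_1 \cup \cdots \cup S_j| \le \varepsilon_0 n/2$; once this invariant first fails we have built $k \ge \varepsilon_0 n/(2C)$ disjoint $t$-islands of $G$, giving $\text{pcol}_\star(\GG)\le t$ with $\varepsilon_1 = \varepsilon_0/(2C)$.

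At step $j+1$, $\varepsilon_0$-resistance applied with $A = U_j$ yields a non-empty $t$-island $T$ of $G$ disjoint from $U_j$ (for instance, the terminal inactive set when $t$-percolation is started from $U_j$). The technical core of the proof is to refine $T$ into a $t$-island $S_{j+1}\subseteq T$ of $G$ of size at most $C$. A useful observation is that every connected component $K$ of $G[T]$ is already a $t$-island of $G$: since $K$ is a component of $G[T]$, each $v \in K$ has no neighbors in $T \setminus K$, so $|N_G(v)\setminus K| = |N_G(v) \setminus T| < t$. Hence if some component of $G[T]$ has size at most $C$, it serves as $S_{j+1}$; otherwise I would descend into a large component $K$, which is itself $H$-minor-free, and apply Theorem~\ref{thm-sparse} to $G[K]$ at an appropriately reduced parameter $t' < t$, chosen so that the excess degree budget $t - t'$ absorbs the at most $t-1$ external neighbors that a vertex of $K$ can have in $G \setminus T$; this lifts a $t'$-island of $G[K]$ to a $t$-island of $G$ of size $\le C$.

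The main obstacle is the refinement step when every component of $G[T]$ is large and the induced subgraph on it is dense. To apply Theorem~\ref{thm-sparse} at parameter $t'$ requires the density of $G[K]$ to be strictly below $t' - \alpha$, while $t'$ must be kept small enough to accommodate the external boundary contribution when lifting to $G$. Calibrating $t'$ uniformly against the linear density bound $|E(G')| \le d(H)|V(G')|$ available for $H$-minor-free graphs, and, when no single choice of $t'$ simultaneously satisfies both constraints, recursing on sub-components with a more favorable boundary profile until a bounded base case is reached, is where the $H$-minor-free hypothesis is used most essentially and constitutes the technically delicate part of the argument.
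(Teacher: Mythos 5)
The observation that each component $K$ of $G[T]$ is again a $t$-island of $G$ is correct and a reasonable start, but the refinement step that the whole plan rests on does not go through, and you correctly sense this when you call it ``the technically delicate part.'' Concretely: a vertex $v\in K$ may have up to $t-1$ neighbors in $V(G)\setminus T$. If $S\subseteq K$ is a $t'$-island of $G[K]$, then $v\in S$ could have $t'-1$ neighbors in $K\setminus S$ \emph{and} $t-1$ neighbors outside $T$, for a total of $t+t'-2$ neighbors outside $S$. For $S$ to be a $t$-island of $G$ you would need $t+t'-2<t$, i.e.\ $t'\le 1$, at which point a $t'$-island of the connected graph $G[K]$ is all of $K$ and no size reduction has occurred. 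Moreover, there is no reason for $G[K]$ to satisfy the density hypothesis of Theorem~\ref{thm-sparse} at any useful parameter: the $t$-island $T$ returned by the resistance argument can be nearly all of $V(G)$, and $G[K]$ can then have density close to $d(H)$, far above $t$. The final sentence about ``recursing on sub-components with a more favorable boundary profile'' does not resolve this — the difficulty is intrinsic, not a matter of bookkeeping, because a $t$-island need not contain any $t$-island of bounded size.

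The paper avoids the shrinking problem entirely by changing the order of operations. Instead of extracting a $t$-island and then trying to cut it down, it first applies a Lipton--Tarjan-style recursive separator decomposition (Lemma~\ref{lemma-bdedcomp}, using Alon--Seymour--Thomas separators for $H$-minor-free graphs) to find a set $X$ with $|X|\le\varepsilon n$ such that every component of $G-X$ has at most $C$ vertices, where $C$ depends only on $H$ and $\varepsilon$. Then it counts components of $G-X$ that contain a $t$-island of $G$: if there are few, the union $Z$ of $X$ with those components has size at most $2\varepsilon n$, and by $2\varepsilon$-resistance some minimal $t$-island $S$ lies outside $Z$; minimality makes $G[S]$ connected, so $S$ sits inside a single component of $G-X$, which then should have been counted — contradiction. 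The boundedness of each island then comes for free from the pre-decomposition, with no need to shrink anything. I would encourage you to redo the argument starting from such a separator-based decomposition rather than from a resistance-produced island.
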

 
Hence for the minor closed classes percolation threshold bounds the clustered coloring number. Rather than proving Theorems~\ref{thm-bndtw} and~\ref{thm-sparse} directly, we bound the percolation threshold and the pervasive clustered coloring number of the corresponding graph classes, proving the following strengthening of Theorems~\ref{thm-bndtw} and~\ref{thm-sparse}, respectively. 

\begin{theorem}\label{thm-pbndtw}
Let $\GG$ be a minor-closed class.  If $\GG$ has bounded treewidth, then $\text{pcol}_\star(\GG)=p(\GG)=\col_\star(\GG)$ is equal to the smallest integer $t$ such that for some $m\ge 0$ neither $K_{t,m}$ nor $I_{t-1}+P_m$ belongs to $\GG$. 
\end{theorem}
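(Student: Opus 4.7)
Write $t_0$ for the smallest integer such that for some $m$, neither $K_{t_0,m}$ nor $I_{t_0-1}+P_m$ belongs to $\GG$. The plan is to prove, separately, the lower bounds $\col_\star(\GG), p(\GG) \ge t_0$ and the upper bound $\text{pcol}_\star(\GG) \le t_0$. These suffice: the introduction already gives $\text{pcol}_\star \ge \col_\star$, and Theorem~\ref{thm-percol1} gives $\text{pcol}_\star = p$ (applicable since every proper minor-closed class is subgraph-closed and satisfies $\GG \subseteq \mc{X}(H)$ for some $H$).

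The lower bounds rest on the observation that for any $t < t_0$, the minimality of $t_0$ forces, for every $m$, at least one of $K_{t,m}$ and $I_{t-1}+P_m$ to belong to $\GG$. Since both families are monotone under taking minors, minor-closedness implies that one of the two families lies entirely in $\GG$. Combining this with Observation~\ref{obs-basgr} gives $\col_\star(\GG) \ge t_0$, and hence $\text{pcol}_\star(\GG) \ge t_0$. For $p(\GG) \ge t_0$, I would use that $K_{t,m}$ admits a $t$-element $t$-percolating set (its smaller part) and $I_{t-1}+P_m$ admits a $t$-element $t$-percolating set (its $t-1$ apices together with one endpoint of $P_m$); both are of constant size, so as $m\to\infty$ no $\varepsilon>0$ makes $\GG$ $\varepsilon$-resistant to $t$-percolation, giving $p(\GG) > t$ for every $t<t_0$.

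For the substantive upper bound $\text{pcol}_\star(\GG) \le t_0$, fix $w$ with $\brm{tw}(G) \le w$ for every $G \in \GG$, and fix $m$ witnessing $K_{t_0,m}, I_{t_0-1}+P_m \notin \GG$. Given $G \in \GG$, I would take a tree decomposition of $G$ of width at most $w$ and extract disjoint islands by processing pendant subtrees one at a time. Each such pendant subtree is separated from the rest of $G$ by a single bag $B$ of size at most $w+1$. From each subtree one would produce a single $t_0$-island of $G$ of size at most $C = C(w, t_0, m)$, delete its interior, and iterate. Because each iteration removes only a bounded number of vertices, this yields $\varepsilon|V(G)|$ pairwise disjoint $t_0$-islands.

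The main obstacle is the single-subtree extraction: inside a bounded-width pendant fragment attached to a bag $B$, one must locate a set $S$ in the interior such that each vertex of $S$ has fewer than $t_0$ neighbors in $V(G) \setminus S$. The count of external neighbors includes those reached through $B$, so the attachment pattern of the interior to $B$ must be analyzed carefully. I expect the argument to proceed by a Ramsey-type dichotomy: either a small $t_0$-island of $G$ already exists in the interior, or the attachment types are so repetitive that one can build a $K_{t_0, m'}$-minor (many interior vertices sharing $t_0$ common attachments to $B$) or an $I_{t_0-1} + P_{m'}$-minor (a long interior path attaching uniformly to $t_0-1$ bag vertices) in $G$ for $m' > m$, contradicting the choice of $m$. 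Making this dichotomy quantitative, and bounding $C$ and $\varepsilon$ uniformly in $w, t_0, m$, is the technical heart of Section~\ref{sec-treedec}.
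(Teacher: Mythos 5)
Your outline has the right skeleton, and your lower-bound arguments are correct (your direct argument for $p(\GG)\ge t_0$ via constant-size $t$-percolating sets in $K_{t,m}$ and $I_{t-1}+P_m$ is a valid alternative to the paper's route, which simply chains $p(\GG)=\text{pcol}_\star(\GG)\ge\col_\star(\GG)\ge t_0$). The problem is the upper bound, which you yourself flag as unfinished: the ``pendant subtree'' sketch leaves the actual extraction step unproved, and as stated it would not go through.

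Concretely, two gaps. First, a bounded-width tree decomposition may have no usable pendant subtree: the tree can be bushy (all pendant subtrees a single bag) or a long path, and these cases need different handling. The paper resolves this by first converting the tree decomposition into a \emph{proper path decomposition} of large order and adhesion at most $w+1$ (Lemma~\ref{lemma-tree-to-path}: a tree with $n^n$ nodes has either a vertex of degree $\ge n$ or a path of length $\ge n$), and then running the Ramsey-type argument along that path rather than on pendant fragments. Second, the dichotomy ``small island or forbidden minor'' is not something you can extract directly from a raw bag $B$ of a tree decomposition; you need the decomposition to be \emph{linked} (so you can route disjoint paths through consecutive bags to contract into the branch vertices of $K_{t_0,m'}$ or $I_{t_0-1}+P_{m'}$), \emph{appearance-universal}, and to have \emph{large interiors} (so ``internal vertices of a bag'' is a well-defined candidate island and external neighbors of internal vertices are controlled). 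Lemmas~\ref{lemma-linked}--\ref{lemma-lint} supply these cleanups, and only then does the pigeonhole on ``extended bag types'' (Observation~\ref{obs-prop} and Lemma~\ref{lemma-main-tw}) deliver the dichotomy. Finally, the paper does not find $\varepsilon|V(G)|$ disjoint islands directly; it proves via Lemma~\ref{lemma-comb} (an inductive argument peeling off small separators) that any small set $S$ avoids some bounded-size $t_0$-island, giving $p(\GG)\le t_0$ directly, and then uses Theorem~\ref{thm-percol1} to upgrade to $\text{pcol}_\star$. Your iterate-and-delete plan would also work in principle, but it adds bookkeeping that the percolation detour avoids. The content you deferred is genuinely the bulk of Section~\ref{sec-treedec}, so the proposal as written is not a proof.
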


\begin{theorem}\label{thm-psparse}
	For every graph $H$, integer $t>0$ and $\alpha >0$ there exists $\delta>0$ satisfying the following. Let $G$ be an $H$-minor-free graph satisfying $|E(G)| \leq (t -\alpha)|V(G)|$. Then $G$ contains at least $\delta|V(G)|$ disjoint $t$-islands. 
\end{theorem}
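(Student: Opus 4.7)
My plan is to deduce Theorem~\ref{thm-psparse} by combining a direct counting argument for percolation resistance with Theorem~\ref{thm-percol1} (and the technique of its proof in Section~\ref{sec-percolation}).

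For the first step, I would show that $G$ is $\varepsilon$-resistant to $t$-percolation for $\varepsilon := \alpha/(2t)$. Suppose some $A_0 \subseteq V(G)$ $t$-percolates in $G$, and list the vertices of $V(G) \setminus A_0$ in the order they become active. By the percolation rule, each newly activated vertex requires at least $t$ neighbors among the previously active vertices, contributing $\geq t$ previously-uncounted edges to $E(G)$. Summing over the $|V(G)|-|A_0|$ activations gives $|E(G)| \geq t(|V(G)| - |A_0|)$, which combined with $|E(G)| \leq (t-\alpha)|V(G)|$ forces $|A_0| \geq \alpha|V(G)|/t > \varepsilon|V(G)|$.

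For the second step, I would invoke the pointwise content of Theorem~\ref{thm-percol1}: any $H$-minor-free graph which is $\varepsilon$-resistant to $t$-percolation contains at least $\delta|V(G)|$ pairwise disjoint $t$-islands, where $\delta = \delta(H, t, \varepsilon)$. Applied with our $\varepsilon = \alpha/(2t)$, this produces the required $\delta = \delta(H, t, \alpha)$.

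The main obstacle is the conversion in the second step. The naive iteration of extracting one $t$-island of $G$ disjoint from the previously chosen ones using percolation resistance yields $t$-islands whose size we do not control, and the sum of their sizes may exhaust the ``budget'' $\varepsilon|V(G)|$ after only a handful of steps. A second attempt --- iteratively invoking Theorem~\ref{thm-sparse} on the residual graph $G - N_G[B_1 \cup \cdots \cup B_k]$ --- produces small sets $S_{k+1}$ that are only $t$-islands of the residual graph, and need not be $t$-islands of $G$ because vertices of $S_{k+1}$ may have many $G$-neighbors in the ``boundary ring'' $N_G(B_1 \cup \cdots \cup B_k) \setminus (B_1 \cup \cdots \cup B_k)$, and this ring cannot in general be removed without destroying a large fraction of $V(G)$ (consider a star-like vertex of very high degree incident to some $B_i$). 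Repairing this discrepancy is precisely where the $H$-minor-free hypothesis enters --- as emphasized by the high-girth bounded-degree example in the introduction, percolation resistance alone does not suffice --- and the argument for it constitutes the substantive content of Section~\ref{sec-percolation}. One technical subtlety is that Theorem~\ref{thm-percol1} is formulated for subgraph-closed classes, while the hypothesis $|E(G)| \leq (t-\alpha)|V(G)|$ is not preserved by induced subgraphs; thus one needs either the pointwise version of the argument or to apply it to a suitably chosen subgraph-closed subclass, which is the route I expect the paper to take.
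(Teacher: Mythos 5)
Your plan is correct, and it differs from the paper's route in a genuine way. The paper proves Theorem~\ref{thm-psparse} directly via Theorem~\ref{thm-percolgen2}: after applying Lemma~\ref{lemma-bdedcomp} to obtain a small set $X$ whose removal leaves only bounded components, it argues that if too few of these components are \emph{$t$-enclaves} (sets $A$ with $e(A)<t|A|$, each of which contains a $t$-island by Lemma~\ref{lemma-enclave}), then the remaining components carry more than $(t-\alpha)|V(G)|$ edges, contradicting the density hypothesis. You instead first prove that density $\le (t-\alpha)n$ forces $\varepsilon$-resistance to $t$-percolation by a nice counting argument --- each newly activated vertex contributes at least $t$ fresh edges, so a percolating set has at least $\alpha n/t$ vertices --- and then plug into Theorem~\ref{thm-percol1}. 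Your edge-counting argument for resistance does not appear in the paper; it is a clean alternative to the $t$-enclave mechanism, and both approaches ultimately rest on the same separator-decomposition tool (Lemma~\ref{lemma-bdedcomp}). The technical subtlety you flag about applying Theorem~\ref{thm-percol1} ``pointwise'' is real but readily resolved exactly as you suggest: apply Lemma~\ref{lemma-bdedcomp} to the subgraph-closed class of \emph{all} $H$-minor-free graphs (which has $O(\sqrt n)$-separators by Alon--Seymour--Thomas), obtain the decomposition for your particular $G$, and then run the proof of Theorem~\ref{thm-percolgen} verbatim using $G$'s individual $2\varepsilon$-resistance; the argument there never uses resistance of any graph other than $G$ itself. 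So your two ``naive'' attempts and their failure modes are correctly diagnosed, but the ``repair'' is not additional work beyond Theorem~\ref{thm-percolgen}, which the paper proves independently. In sum: a correct alternative proof, of comparable length to the paper's, trading the $t$-enclave counting for a percolation-activation counting plus an invocation of Theorem~\ref{thm-percolgen}.
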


Note that Theorem~\ref{thm-psparse} implies the following strengthening of Corollary~\ref{cor-sparse}.

\begin{corollary}\label{cor-psparse}
Let $\GG$ be a class of graphs closed under taking subgraphs, such that $\GG \subseteq \mc{X}(H)$ for some graph $H$. Let real $d > 0$ be such that $|E(G)|\leq d|V(G)|+o(|V(G)|)$ for all graphs $G\in\GG$. Then $\text{pcol}_\star(\mc{G}) \leq  \lfloor d+1 \rfloor$.
\end{corollary}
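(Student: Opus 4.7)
The approach is to reduce the statement directly to Theorem~\ref{thm-psparse} by choosing appropriate parameters. I would set $t := \lfloor d+1\rfloor$, which satisfies $t > d$, and define $\alpha := (t-d)/2 > 0$. Feeding the forbidden minor $H$ from $\GG \subseteq \mc{X}(H)$ together with $t$ and $\alpha$ into Theorem~\ref{thm-psparse} produces a constant $\delta > 0$.

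Next I would unpack the density hypothesis $|E(G)| \leq d|V(G)| + o(|V(G)|)$ to extract a threshold $N$ such that every $G \in \GG$ with $|V(G)| \geq N$ satisfies $|E(G)| \leq (d+\alpha)|V(G)| = (t-\alpha)|V(G)|$. For such $G$, Theorem~\ref{thm-psparse} immediately yields at least $\delta|V(G)|$ pairwise disjoint $t$-islands, which is exactly the kind of bound required by the definition of $\text{pcol}_\star$. The only loose end is the finitely many graphs in $\GG$ with $|V(G)| < N$; for each such graph, $V(G)$ itself is trivially a $t$-island, since no vertex has a neighbor outside $V(G)$, so every non-empty graph in $\GG$ contains at least one $t$-island.

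To unify the two regimes I would set $\varepsilon := \min(\delta, 1/N)$. For $|V(G)| \geq N$ we have $\varepsilon|V(G)| \leq \delta|V(G)|$, so the disjoint $t$-islands produced by Theorem~\ref{thm-psparse} are enough; for $|V(G)| < N$ we have $\varepsilon|V(G)| \leq \varepsilon N \leq 1$, so the single $t$-island $V(G)$ suffices. This gives $\text{pcol}_\star(\GG) \leq t = \lfloor d+1 \rfloor$, completing the argument. Since the derivation is essentially bookkeeping on top of the substantive content already packaged into Theorem~\ref{thm-psparse}, there is no real obstacle; the only mild subtlety is handling the graphs below the threshold $N$, which is disposed of by the trivial observation that every non-empty vertex set with no outside neighbors is vacuously a $t$-island.
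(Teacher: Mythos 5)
Your proof is correct and follows the same route the paper intends; the paper simply asserts that Corollary~\ref{cor-psparse} follows from Theorem~\ref{thm-psparse} without supplying details, and you have correctly filled them in (the choice of $t$ and $\alpha$, unpacking the $o(\cdot)$ term to find a threshold $N$, and the observation that $V(G)$ is itself a $t$-island for the finitely many graphs below the threshold).
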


We prove Theorems~\ref{thm-percol1} and~\ref{thm-psparse} (and thus Theorem~\ref{thm-sparse})   in Section~\ref{sec-percolation}, and we prove
Theorem~\ref{thm-pbndtw} (and thus Theorem~\ref{thm-bndtw}) in Section~~\ref{sec-treedec}.
  
\section{Percolation and clustered coloring in classes with sublinear separators}~\label{sec-percolation}

In this section we prove Theorems~\ref{thm-percol1} and~\ref{thm-psparse}. In
fact we show that these results hold not just for minor-closed classes, but for
a wider family of graph classes which admit ``good'' separators. We start by
defining this family. 

A \emph{separation} of a graph $G$ is a pair $(L,R)$ of its subgraphs such that $G=L\cup R$;
note that $V(L\cap R)$ is a vertex-cut in $G$ separating $V(L)\setminus V(R)$ from $V(R)\setminus V(L)$,
if these two sets are non-empty.
Let $f:\mathbb{N}\to\mathbb{N}$ be a non-decreasing function.  We say that a class of graphs $\mc{G}$ has \emph{$f$-separators},
if for every graph $G \in \mc{G}$ there exists a separation $(L,R)$ of $G$ of order at most $f(|V(G)|)$ such that
$|V(L)\setminus V(R)|,|V(R)\setminus V(L)|\le \frac{2}{3}|V(H)|$.
We say that a function $f$ is \emph{significantly sublinear} if the sum $$\sum_{i\ge 0} \frac{f((3/2)^i)}{(3/2)^i}$$ is finite\footnote{Note that this is the case for example if $f(n)=n/g(n)$ for some function $g(n)=\Omega(\log^{1+\varepsilon} n)$ with $\varepsilon>0$.}.
We use an argument of Lipton and Tarjan~\cite{LipTar80} to prove the following.

\begin{lemma}\label{lemma-bdedcomp}
	Let $f:\mathbb{N}\to\mathbb{N}$ be a non-decreasing significantly sublinear function.
	Let $\mc{G}$ be a class of graphs closed under taking subgraphs that has $f$-separators.
	For every $\varepsilon>0$ there exists $C$ as follows. For every $n$-vertex graph $G \in \mc{G}$ there exists $X \subseteq V(G)$ such that
	$|X|\le \varepsilon n$ and every component of $G - X$ has at most $C$ vertices.  
\end{lemma}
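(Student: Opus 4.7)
The plan is to adapt the classical Lipton--Tarjan style recursive decomposition. Given $G\in\mc{G}$ with $n$ vertices, I would recursively apply the $f$-separator: whenever a current piece $P$ has more than $C$ vertices, take an $f$-separation $(L,R)$ of the induced subgraph on $P$ (which lies in $\mc{G}$ by subgraph-closure), add the cut $S=V(L)\cap V(R)$ to a global set $X$, and recurse on the two smaller pieces $V(L)\setminus V(R)$ and $V(R)\setminus V(L)$, each of size at most $\tfrac{2}{3}|V(P)|$. At the leaves this produces pairwise disjoint pieces of size at most $C$; any two distinct leaf pieces are separated in $G-X$ by a cut inserted somewhere higher in the recursion tree, so every component of $G-X$ is contained in a single leaf piece and has at most $C$ vertices. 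It then suffices to show that $C$ can be chosen so that $|X|\le \eps n$.

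Let $T(n)$ denote the worst-case value of $|X|$ on an $n$-vertex input from $\mc{G}$. For $n\le C$ we have $T(n)=0$; for $n>C$, the identity $|V(L)|+|V(R)|=n+|S|$ gives child sizes $n_1,n_2$ with $n_1+n_2\le n$ and $n_1,n_2\le \tfrac{2}{3}n$, whence
\[
T(n)\ \le\ f(n)+T(n_1)+T(n_2).
\]
Set
\[
A(n)\ =\ \sum_{i\ge 0,\ (2/3)^i n>C}\frac{f((2/3)^i n)}{(2/3)^i n}.
\]
Then $A$ is non-decreasing, and reindexing gives the telescoping identity $A(n)=f(n)/n+A(\tfrac{2}{3}n)$ whenever $n>C$. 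A straightforward induction on $n$ then yields $T(n)\le n\cdot A(n)$: the inductive step uses the recurrence above together with $n_1 A(n_1)+n_2 A(n_2)\le (n_1+n_2)A(\tfrac{2}{3}n)\le n\cdot A(\tfrac{2}{3}n)$.

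Finally, I would bound $A(n)$ uniformly by the tail of a convergent series. Since consecutive values $(2/3)^i n$ differ by a factor of $3/2$, each interval $((3/2)^k,(3/2)^{k+1}]$ contains at most one of them, so the monotonicity of $f$ gives
\[
A(n)\ \le\ \tfrac{3}{2}\sum_{k\ge \lfloor \log_{3/2} C\rfloor}\frac{f((3/2)^k)}{(3/2)^k}.
\]
The significantly sublinear hypothesis is exactly the assertion that the full series (without the truncation) converges, so by choosing $C$ large enough in terms of $\eps$ we can make this tail at most $\eps$, giving $|X|\le T(n)\le \eps n$. The main obstacle, and what forces the ``significantly sublinear'' rather than merely ``sublinear'' condition, is that a naive analysis relying only on $n_i\le \tfrac{2}{3}n$ would give $T(n)\le f(n)+2T(\tfrac{2}{3}n)$, which telescopes to a sum of the form $\sum 2^i f((2/3)^i n)$ and does not yield a linear bound; the key point is to exploit the additive bound $n_1+n_2\le n$ to pool the two subproblem costs and replace the factor of $2$ per level by a factor of $1$, which is what makes the series $\sum f((3/2)^k)/(3/2)^k$ the right one to look at.
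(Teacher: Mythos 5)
Your overall strategy is sound and quite close in spirit to the paper's: recurse with the separator, build a recursion tree, and bound the total separator cost by charging it through a geometrically-decaying series rounded to powers of $3/2$. The exploitation of the additive bound $n_1+n_2\le n$ (rather than only $n_1,n_2\le\tfrac{2}{3}n$) is indeed the crucial point, and your final tail-sum estimate
$\tfrac{3}{2}\sum_{k\ge\lfloor\log_{3/2}C\rfloor}f((3/2)^k)/(3/2)^k$
matches the paper's.

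However, there is a genuine gap. Your induction hinges on the assertion that
\[
A(n)\ =\!\!\sum_{i\ge 0,\ (2/3)^i n>C}\!\!\frac{f((2/3)^i n)}{(2/3)^i n}
\]
is non-decreasing in $n$, so that $A(n_1),A(n_2)\le A(\tfrac{2}{3}n)$. This is false in general. Each summand $f(x)/x$ is typically \emph{decreasing} in $x$ (e.g.\ for $f(x)=\sqrt{x}$ or $f$ constant), while the number of summands only increases when $n$ crosses a power of $3/2$. The resulting $A$ is a sawtooth: it drops as $n$ grows within an interval $[(3/2)^k,(3/2)^{k+1})$ and jumps up at the boundary. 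Concretely, with $f\equiv 1$ and $C=1$ one has $A(2)=\tfrac12+\tfrac34=\tfrac54$ but $A(9/4)=\tfrac49+\tfrac23=\tfrac{10}{9}<\tfrac54$, so a child of size $n_1=2$ may satisfy $A(n_1)>A(\tfrac{2}{3}n)$. Without monotonicity, the step $n_1A(n_1)+n_2A(n_2)\le(n_1+n_2)A(\tfrac{2}{3}n)$ is not justified, and this is exactly where the inductive argument relies on it.

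The fix, which is essentially what the paper does, is to round to powers of $3/2$ \emph{before} forming the potential function rather than after. The paper avoids a recurrence in $n$ altogether: it assigns to every internal node $v$ of the recursion tree the rank $\lfloor\log_{3/2}|V(\theta(v))|\rfloor$, observes that rank is strictly decreasing root-to-leaf so that nodes of the same rank have disjoint pieces, and then for each rank $i$ bounds the number of such pieces by $n/(3/2)^i$ and each separator by $f((3/2)^{i+1})$, summing over $i$ to get the tail-series bound directly. If you prefer to keep your recurrence, replace $A$ by the genuinely non-decreasing potential
\[
\hat A(n)=\tfrac{3}{2}\sum_{k=\lfloor\log_{3/2}C\rfloor+1}^{\lfloor\log_{3/2}n\rfloor+1}\frac{f((3/2)^k)}{(3/2)^k}\quad(n>C),\qquad \hat A(n)=0\ (n\le C);
\]
one checks that $\hat A(n)-\hat A(\tfrac{2}{3}n)\ge f(n)/n$, which is what the telescoping really needs, and then your induction goes through verbatim. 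As written, though, the claim about $A$ is false and the proof does not close.
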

\begin{proof}
	Since $f$ is significantly sublinear, there exists $i_0$ such that
	$$\sum_{i\ge i_0} \frac{f((3/2)^i)}{(3/2)^i}\le \frac{2}{3}\varepsilon.$$
	Let $C=\lceil (3/2)^{i_0}\rceil$.
	
	Without loss of generality, we can assume that $G$ is connected, as otherwise we can find the set $X$ separately in each component.
	Let us define a rooted tree $T$ and a mapping $\theta$ of its vertices to connected induced subgraphs of $G$ as follows.
	For the root $r$ of $T$, we set $\theta(r)=G$.  For any $v\in V(G)$, if $|V(\theta(v))|\le C$, then $v$ is a leaf of $T$.
	Otherwise, let $(L_v,R_v)$ be a separation of $\theta(v)$ order at most $f(|V(\theta(v))|)$ and let $X_v=L_v\cap R_v$.
	If $H_1$, \ldots, $H_k$ are the components of $\theta(v)-X_v$, then $v$ has $k$ children $v_1$, \ldots, $v_k$ in $T$ with
	$\theta(v_i)=H_i$ for $i=1,\ldots, k$.
	
	We let $X$ be the union of the sets $X_v$ over all non-leaf vertices $v$ of $T$.  By the construction of $T$, every component
	of $G-X$ has at most $C$ vertices, and thus it suffices to bound the size of $X$.  For $v\in V(T)$, define
	$\mathtt{rank}(v)=\lfloor \log_{3/2} |V(\theta(v))|\rfloor$.  Observe that the rank is decreasing on each path in $T$
	starting in the root, and in particular, if $\mathtt{rank}(u)=\mathtt{rank}(v)$ for distinct $u,v\in V(T)$, then
	$\theta(u)$ and $\theta(v)$ are vertex-disjoint.  For any $i\ge \lfloor \log_{3/2} C\rfloor$, let $V_i$
	be the set of non-leaf vertices $v$ of $T$ of rank $i$ and let $X_i=\bigcup_{v\in V_i} X_v$.
	Since $(3/2)^i\le |V(\theta(v))|<(3/2)^{i+1}$ for all $v\in V_i$, we have $|V_i|\le n/(3/2)^i$ and $|X_v|\le f((3/2)^{i+1})$,
	and thus $|X_i|\le \frac{f((3/2)^{i+1})}{(3/2)^i}n$.  Consequently,
	$$|X|\le \sum_{i\ge \lfloor \log_{3/2} C\rfloor} |X_i|\le n\cdot\frac{3}{2}\sum_{i\ge \lfloor \log_{3/2} C\rfloor+1}\frac{f((3/2)^i)}{(3/2)^i}\le \varepsilon n,$$
	as required.
\end{proof}

In order to prove a strengthening of Theorem~\ref{thm-psparse}, we need one more definition.
For a subset $A\subseteq V(G)$, let $e_G(A)$ (or $e(A)$ for ease of notation when the graph $G$ is clear from the context)
denote the number of edges of $G$ with at least one end in $A$. 
We say that $A\subseteq V(G)$ is a \emph{$t$-enclave} if $e_G(A) < t|A|$.
The following observation is easy, but useful.

\begin{lemma}\label{lemma-enclave}
	Let $G$ be a graph, and let $A$ be a $t$-enclave in $G$. Then there exists a $t$-island $S \subseteq A$ in $G$.
\end{lemma}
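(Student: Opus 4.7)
The plan is to obtain a $t$-island $S\subseteq A$ by a greedy peeling argument: start with $S_0 \coloneqq A$, and as long as $S_i$ is non-empty but fails to be a $t$-island, pick a witness vertex $v_i \in S_i$ with at least $t$ neighbors in $V(G)\setminus S_i$ and set $S_{i+1} \coloneqq S_i\setminus\{v_i\}$. The process terminates, either producing an empty $S$ or a non-empty $t$-island. I would then rule out the first case by an edge count.

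The key observation is that when we remove $v_i$, the at-least-$t$ edges joining $v_i$ to $V(G)\setminus S_i$ can be charged to $v_i$, and each such edge is incident to $A$ (since $v_i\in A$). Moreover, each edge with at least one endpoint in $A$ is charged at most once: if the edge goes from $v_i\in A$ to $u\notin A$, it is charged exactly when $v_i$ is removed; and if both endpoints $v_j,v_i$ lie in $A$ with $j<i$, then at step $j$ the endpoint $v_i$ still belongs to $S_j$, so the edge is not charged then, and it gets charged only at step $i$ when $v_j\in V(G)\setminus S_i$. Hence the total charge is bounded above by $e_G(A)$.

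Now suppose for contradiction that the process peels away all of $A$. Then $|A|$ vertices are removed, each contributing at least $t$ to the total charge, so $e_G(A)\ge t|A|$, contradicting the hypothesis $e_G(A)<t|A|$ that $A$ is a $t$-enclave. Therefore the process halts at a non-empty $S\subseteq A$, which by construction is a $t$-island in $G$, as required.

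There is no real obstacle: the only point that requires any care is verifying that edges within $A$ are not double-counted under the charging scheme, which the ordering of removals handles cleanly.
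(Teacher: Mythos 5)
Your proof is correct and is essentially the paper's argument in a different guise: the paper takes a minimal $t$-enclave $S\subseteq A$ and observes that removing a vertex with $\ge t$ external neighbors would leave a smaller $t$-enclave (since $e(S\setminus v)\le e(S)-t<t|S\setminus v|$), while you unroll exactly this invariant into a greedy peeling process and verify globally via a charging argument that the set cannot be emptied. Both proofs rest on the same edge-count observation, so this counts as the same approach.
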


\begin{proof}
	Choose a minimal $t$-enclave $S \subseteq A$. Note that $S\neq\emptyset$, since $0\le e(S)<t|S|$.
	We claim that $S$ is an island, as desired. Suppose for a contradiction that there exists $v \in S$ with at least $t$ neighbors in $V(G)-S$. Then 
	$$e(S \setminus v) \leq e(S)-t < t|S|-t = t|S \setminus v|.$$ 
	Thus $S \setminus v$ is a $t$-enclave, contradicting the choice of $S$.
\end{proof}

Alon, Seymour and Thomas~\cite{alon1990separator} proved that any minor-closed class of graphs $\mc{G}$ has $f$-separators
for $f(n)=O(n^{1/2})$, which is significantly sublinear.
Therefore the next result implies Theorem~\ref{thm-psparse}.

\begin{theorem}\label{thm-percolgen2}
	Let $f:\mathbb{N}\to\mathbb{N}$ be a non-decreasing significantly sublinear function.
	Let $\mc{G}$ be a class of graphs closed under taking subgraphs that has $f$-separators.
	For every integer $t>0$ and $\alpha >0$ there exists $\delta>0$ so that every $G \in \mc{G}$ satisfying $|E(G)| \leq (t -\alpha)|V(G)|$ contains at least $\delta|V(G)|$ disjoint $t$-islands. 
\end{theorem}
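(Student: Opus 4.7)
The plan is to combine Lemma~\ref{lemma-bdedcomp} with an averaging argument based on Lemma~\ref{lemma-enclave}. Fix $\varepsilon = \alpha/(2t)$, and let $C$ be the constant produced by Lemma~\ref{lemma-bdedcomp} for this $\varepsilon$ (applied to the class $\mc{G}$). Given $G \in \mc{G}$ with $n = |V(G)|$ and $|E(G)| \leq (t-\alpha)n$, apply Lemma~\ref{lemma-bdedcomp} to obtain $X \subseteq V(G)$ with $|X| \leq \varepsilon n$ such that every connected component of $G - X$ has at most $C$ vertices. Let $K_1,\ldots,K_r$ be these components.

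The crucial observation is that distinct components of $G - X$ are not joined by any edge of $G$. Consequently, each edge of $G$ with an end in $V(K_i)$ either lies inside $K_i$ or has its other end in $X$, so
\[
\sum_{i=1}^r e_G(V(K_i)) \;\leq\; |E(G)| \;\leq\; (t-\alpha)n.
\]
Call $K_i$ \emph{good} if $V(K_i)$ is a $t$-enclave, i.e.\ $e_G(V(K_i)) < t|V(K_i)|$, and \emph{bad} otherwise. Summing over the bad components yields $t\sum_{\text{bad}} |V(K_i)| \leq (t-\alpha)n$, so the total size of bad components is at most $(1-\alpha/t)n$. Since $\sum_i |V(K_i)| \geq (1-\varepsilon)n$, the good components together cover at least $(\alpha/t - \varepsilon)n = \alpha n/(2t)$ vertices. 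Because each component has at most $C$ vertices, there are at least $\alpha n/(2tC)$ good components.

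Finally, Lemma~\ref{lemma-enclave} guarantees that each good $V(K_i)$ contains a $t$-island of $G$. These islands are contained in the pairwise disjoint sets $V(K_i)$ and are therefore themselves pairwise disjoint, so setting $\delta = \alpha/(2tC)$ furnishes at least $\delta n$ disjoint $t$-islands as required.

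I do not foresee a real obstacle beyond bookkeeping: the only subtlety is noticing that, once $X$ is removed, every edge still carries a well-defined ``home component'' when it has an end outside $X$, which is precisely what makes the inequality $\sum_i e_G(V(K_i)) \leq |E(G)|$ hold with no double counting. Everything else is a direct averaging argument followed by an invocation of Lemmas~\ref{lemma-bdedcomp} and~\ref{lemma-enclave}.
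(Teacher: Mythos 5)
Your proof is correct and follows essentially the same route as the paper: apply Lemma~\ref{lemma-bdedcomp} with $\varepsilon=\alpha/(2t)$ to get a bounded-size hitting set $X$, observe that $\sum_K e_G(V(K))\le|E(G)|$ over components of $G-X$, use a density count to show that a $\delta n$ fraction of components are $t$-enclaves, and then invoke Lemma~\ref{lemma-enclave}. The only cosmetic difference is that you count the good components directly, whereas the paper phrases the same inequality as a proof by contradiction; the constants and lemmas used are identical.
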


\begin{proof}
	Let $\eps = \frac{\alpha}{2t}$.   Let $C$ be chosen to satisfy the conclusion of Lemma~\ref{lemma-bdedcomp} for this $\varepsilon$ and $\mc{G}$, and let $\delta=\frac{\alpha}{2tC}$.
	Consider an $n$-vertex graph $G \in \mc{G}$ satisfying $|E(G)| \leq (t-\alpha)n$. By the choice of $C$ there exists a set $X \subseteq V(G)$ of size
	at most $\varepsilon n$ such that every component of $G - X$ has at most $C$ vertices. Let $\mc{K}$ be the collection of vertex sets
	of the components of $G - X$, let $\mc{K}'$ be the collection of the sets in $\mc{K}$ which are $t$-enclaves,
	and let $\mc{K}'' = \mc{K} - \mc{K}'$.  By Lemma~\ref{lemma-enclave} it suffices to show that $|\mc{K}'| \geq \delta n$.
	If not, then $$\sum_{K \in \mc{K}''}|K| = n - |X| - \sum_{K \in \mc{K}'}|K| > (1 - \eps - \delta C)n = \left(1 - \frac{\alpha}{t}\right)n.$$
	Moreover, we have $e(K) \geq t |K|$ for every $K \in \mc{K}''$, and thus
	\begin{align*}
	|E(G)| \geq \sum_{K \in \mc{K''}}e(K) \geq t\sum_{K \in \mc{K''}}|K| >  t\left(1 - \frac{\alpha}{t}\right)n = (t -\alpha)n,
	\end{align*}
	which contradicts the assumption of the theorem.
\end{proof}

Let us remark that the argument of Theorem~\ref{thm-percolgen2} also directly implies Theorem~\ref{thm-sparse},
since all the $t$-enclaves we obtain have bounded size (it is possible to simplify the proof a bit in this weakened case,
since we only need to prove the existence of one such $t$-enclave).

The following result with an analogous proof implies Theorem~\ref{thm-percol1}.

\begin{theorem}\label{thm-percolgen}
	Let $f:\mathbb{N}\to\mathbb{N}$ be a non-decreasing significantly sublinear function.
	Let $\mc{G}$ be a class of graphs closed under taking subgraphs that has $f$-separators.
	Then $p(\mc{G}) = \text{pcol}_\star(\mc{G})$.
\end{theorem}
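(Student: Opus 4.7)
The plan is to prove the non-trivial inequality $\text{pcol}_\star(\mc{G})\le p(\mc{G})$; the reverse was already noted in the introduction for any subgraph-closed class. We may assume $t:=p(\mc{G})$ is finite, so there exists $\varepsilon>0$ such that every $G\in\mc{G}$ is $\varepsilon$-resistant to $t$-percolation. Set $\varepsilon_0:=\varepsilon/2$, let $C$ be the constant returned by Lemma~\ref{lemma-bdedcomp} applied to $\mc{G}$ and $\varepsilon_0$, and put $\delta:=\varepsilon/(2C)$. The goal is to show that every $G\in\mc{G}$ contains at least $\delta|V(G)|$ pairwise disjoint $t$-islands.

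Given $G\in\mc{G}$ with $n:=|V(G)|$, I would first apply Lemma~\ref{lemma-bdedcomp} to obtain $X\subseteq V(G)$ with $|X|\le\varepsilon_0 n$ such that every component of $G-X$ has at most $C$ vertices; let $\mc{K}$ be the collection of vertex sets of these components. The crucial structural observation is the following \emph{splitting property}: if $S$ is a $t$-island of $G$ disjoint from $X$ and $K\in\mc{K}$ satisfies $S\cap K\ne\emptyset$, then $S\cap K$ is itself a $t$-island of $G$. Indeed, every vertex $v\in K$ has all its neighbors in $X\cup K$, and so, when $S$ avoids $X$, the set of neighbors of $v$ outside $S\cap K$ coincides with the set of neighbors of $v$ outside $S$, which has size less than $t$.

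Next, call $K\in\mc{K}$ \emph{good} if $K$ contains a $t$-island of $G$ disjoint from $X$, and \emph{bad} otherwise. Since distinct components of $G-X$ are vertex-disjoint, picking one such $t$-island inside each good component would produce a pairwise disjoint family, so it suffices to show that at least $\delta n$ components are good. Suppose instead that fewer than $\delta n$ components are good, and set
$$A_0 := X \cup \bigcup\{K\in\mc{K}:K\text{ is good}\}.$$
Then $|A_0|\le \varepsilon_0 n+\delta Cn\le\varepsilon n$, so by $\varepsilon$-resistance $A_0$ fails to $t$-percolate, and hence there exists a $t$-island $S$ of $G$ disjoint from $A_0$. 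Such $S$ is disjoint from $X$ and is contained in the union of bad components; picking any bad $K$ with $S\cap K\ne\emptyset$ and applying the splitting property yields a $t$-island of $G$ lying inside $K$ and disjoint from $X$, contradicting the badness of $K$.

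The main conceptual step is the splitting property, which crucially uses that $X$ separates the components of $G-X$ and that $S$ avoids $X$; once it is established, the rest is a direct counting argument essentially parallel to that of Theorem~\ref{thm-percolgen2}, with ``$t$-enclave'' replaced by ``good component''. The only thing one needs to verify at the outset is that the case $p(\mc{G})=\infty$ is vacuous, which is immediate from the already-known inequality $\text{pcol}_\star(\mc{G})\ge p(\mc{G})$.
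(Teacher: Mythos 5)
Your proof is correct and takes essentially the same approach as the paper: apply Lemma~\ref{lemma-bdedcomp} to shatter $G$ into small components, then observe that if few components host a $t$-island, deleting $X$ together with all such components leaves a set small enough that $\varepsilon$-resistance forces a $t$-island in what remains, contradicting the definition of the hosting components. The only (minor, cosmetic) difference is how you localize that last $t$-island to a single component: you prove a ``splitting property'' that the intersection of a $t$-island avoiding $X$ with a component is again a $t$-island, whereas the paper picks a minimal $t$-island and notes it is connected and hence contained in one component.
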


\begin{proof} Since $\text{pcol}_\star(\mc{G})\ge p(\mc{G})$ in general, it suffices to show
that $\text{pcol}_\star(\mc{G})\le p(\mc{G})$. Letting $t=p(\mc{G})$, there exists $\eps> 0$ such that $\mc{G}$ is $2\eps$-resistant to $t$-percolation.
Let $C$ be chosen to satisfy the conclusion of Lemma~\ref{lemma-bdedcomp} for 
$\varepsilon$ and $\mc{G}$.   We claim that every graph $G \in \mc{G}$ contains at least $\frac{\eps}{C}|V(G)|$ disjoint $t$-islands.
The claim implies that $\text{pcol}_\star(\mc{G}) \leq t$, and hence the theorem.
	
It remains to establish the claim. Let $G \in \mc{G}$ be an $n$-vertex graph.  By the choice of $C$, there exists $X \subseteq V(G)$ such that
$|X|\le \varepsilon n$ and every component of $G - X$ has at most $C$ vertices.
Let $\mc{K}$ be the set of all components $K$ of $G - X$ such that there exists a $t$-island $S \subseteq V(K)$ in $G$.
It suffices to show that $|\mc{K}| \geq \frac{\eps}{C}n$. If not, then the set $Z = X \cup \bigcup_{K \in \mc{K}}V(K)$
has size at most $\eps n + C|\mc{K}|\leq 2\eps n$. By the choice of $\eps$ there exists a $t$-island $S \subseteq V(G) - V(Z)$. Choose such $S$ to be minimal, then $G[S]$ is connected, and so $S \subseteq V(K)$ for some component $K$ of $G-X$. However,  $K \not \in \mc{K}$ by the choice of $Z$. This contradiction finishes the proof. 
\end{proof}	

\section{Clustered chromatic number of classes of bounded treewidth}~\label{sec-treedec}

We start by introducing the necessary concepts.
For sets $A,B$ of vertices of a graph of the same size $k$, an \emph{$A-B$ linkage} is a set of $k$ pairwise vertex-disjoint paths
with one end in $A$ and the other end in $B$.

Let $G$ and $H$ be graphs.  An \emph{$H$-decomposition} of $G$ is a function $\beta$ that to each vertex $z$ of $H$ assigns a subset of
vertices of $G$ (called the \emph{bag} of $z$), such that for every $uv\in E(G)$, there exists $z\in V(H)$ with $\{u,v\}\subseteq \beta(z)$, and
for every $v\in V(G)$, the set $\{z:v\in\beta(z)\}$ induces a non-empty connected subgraph of $H$.  When $H$ is a path or
a tree, we say that $(H,\beta)$ is a \emph{path or tree decomposition} of $G$, respectively.  The \emph{width} of an $H$-decomposition is defined as  $\max\{|\beta(x)|:x\in V(H)\}-1$, and the \emph{adhesion}
of an $H$-decomposition is  $\max\{|\beta(x)\cap\beta(y)|:xy\in E(H)\}$. The \emph{treewidth $\brm{tw}(G)$} of a graph $G$ is the minimum width of a tree decomposition of $G$.

A path decomposition $(H,\beta)$ is \emph{linked}
if for every $z\in V(H)$ with two neighbors $x$ and $y$, there exists a $(\beta(z)\cap\beta(x))-(\beta(z)\cap\beta(y))$ linkage
in $G[\beta(z)]$ (and in particular, all the intersections of adjacent bags have the same size).  Conversely, if $|\beta(z)\cap\beta(x)|=|\beta(z)\cap\beta(y)|=p$
and there exists a separation $(A,B)$ of $G[\beta(z)]$ of order less than $p$ with $\beta(z)\cap\beta(x)\subseteq V(A)$
and $\beta(z)\cap\beta(y)\subseteq V(B)$, we say that $z$ has \emph{broken bag}; by Menger's theorem, a path decomposition such that all the intersections of adjacent bags have the same size
is linked if and only if it does not contain a broken bag.  An $H$-decomposition is \emph{proper}
if $\beta(x)\not\subseteq\beta(y)$ for every $xy\in E(H)$.  The \emph{order} of an $H$-decomposition is $|V(H)|$.
For a path decomposition $(H,\beta)$, a path decomposition $(H',\beta')$ is a \emph{coarsening} of $(H,\beta)$
if there exists a model $\mu$ of $H'$ in $H$ such that $\bigcup_{z\in V(H')} V(\mu(z))=V(H)$ and $\beta'(z)=\bigcup_{x\in V(\mu(z))} \beta(x)$ for all $z\in V(H')$.
A path decomposition of $G$ is \emph{appearance-universal} if every vertex $v\in V(G)$ either appears in all bags of the decomposition,
or in at most two (consecutive) bags.  A vertex $v$ is \emph{internal} if it appears in only one bag.  A path decomposition
$(H,\beta)$ \emph{has large interiors} if for every $z\in V(H)$ with two neighbors $x$ and $y$, there exists an internal vertex
contained in $\beta(z)$
%, no edge has one end in $\beta(x)\setminus \beta(y)$ and the other end in $\beta(y)\setminus\beta(x)$,
and no internal vertex has a neighbor both in $\beta(x)\setminus \beta(y)$ and in $\beta(y)\setminus\beta(x)$.

%For a set $S\subseteq V(G)$, a path decomposition $(H,\beta)$ is \emph{$S$-avoiding} if each vertex of $S$ is contained
%either in all the bags of the decomposition, or only in one of the bags of the endvertices of the path $H$.

We now show how to transform a tree decomposition of large order and bounded width into a path decomposition
of large order and bounded adhesion, and then to clean it up, making it linked, appearance-universal, % $S$-avoiding,
and with large interiors.
The techniques to do so are standard and appear e.g. in~\cite{kntw}; we give brief arguments here to adjust for minor technical details and
notational differences.

\begin{observation}
A coarsening of a proper decomposition is proper.  A coarsening of a linked decomposition is linked.  A coarsening of
a decomposition of adhesion $p$ has adhesion at most $p$.
A coarsening of an appearance-universal decomposition is appearance-universal; furthermore, if an appearance-universal decomposition
has large interiors, then its coarsening has large interiors.
%A coarsening of an $S$-avoiding path decomposition is $S$-avoiding.
\end{observation}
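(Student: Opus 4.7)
The plan is to reduce all four assertions to a single structural identity: for every edge $x'y' \in E(H')$,
$$\beta'(x') \cap \beta'(y') = \beta(a) \cap \beta(b),$$
where $a \in V(\mu(x'))$ and $b \in V(\mu(y'))$ are the unique pair with $ab \in E(H)$. The inclusion $\supseteq$ is immediate from the definition of coarsening. For the reverse inclusion one invokes the standard fact that in a path decomposition the set of bags containing any fixed vertex forms a connected subpath: a vertex of $\beta'(x') \cap \beta'(y')$ appears in some bag of $\mu(x')$ and some bag of $\mu(y')$, so by connectedness it appears in every bag of $H$ on the unique path between those two, in particular in both $\beta(a)$ and $\beta(b)$.

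From this identity the first three parts fall out quickly. For properness, pick $v \in \beta(a) \setminus \beta(b)$ (nonempty because $(H,\beta)$ is proper); the same connectedness argument shows $v$ appears in no bag of $\mu(y')$, so $v \in \beta'(x') \setminus \beta'(y')$. The adhesion bound reads $|\beta'(x')\cap\beta'(y')| = |\beta(a)\cap\beta(b)| \le p$. For linkedness, write $\mu(z') = z_1 z_2 \cdots z_k$ and let $z_0 \in V(\mu(x'))$ and $z_{k+1} \in V(\mu(y'))$ be the $H$-neighbors of $z_1$ and $z_k$, so that the identity yields $\beta'(z')\cap\beta'(x') = \beta(z_1)\cap\beta(z_0)$ and $\beta'(z')\cap\beta'(y') = \beta(z_k)\cap\beta(z_{k+1})$. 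Linkedness of $(H,\beta)$ provides, for every $1 \le i \le k$, a linkage in $G[\beta(z_i)]$ between $\beta(z_i)\cap\beta(z_{i-1})$ and $\beta(z_i)\cap\beta(z_{i+1})$; since all adhesion-sized intersections have the same cardinality $p$, one can concatenate these $k$ linkages endpoint-to-endpoint to obtain the desired linkage inside $G[\beta'(z')]$.

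Appearance-universality is quick: a vertex in every bag of $H$ remains in every bag of $H'$, while a vertex confined to at most two consecutive bags of $H$ lands in one or two consecutive bags of $H'$, depending on whether $\mu$ separates them. For preservation of large interiors, pick any $z_0 \in V(\mu(z'))$ and note that $z_0$ has two $H$-neighbors, call them $z_-$ on the $\mu(x')$ side and $z_+$ on the $\mu(y')$ side, because $z'$ has two $H'$-neighbors. The large-interiors hypothesis at $z_0$ supplies an internal vertex $w \in \beta(z_0)$ with the non-adjacency property; since $w$ lies in exactly one bag of $H$, it lies in exactly one bag of $H'$, namely $\beta'(z')$, so $w$ is internal in $H'$.

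The main obstacle is verifying that this $w$ still has no neighbor simultaneously in $\beta'(x')\setminus\beta'(y')$ and $\beta'(y')\setminus\beta'(x')$. Suppose such neighbors $u_1, u_2$ existed; both lie in $\beta(z_0)$ because $w$'s only bag is $\beta(z_0)$. Connectedness of the bags containing $u_1$ forces $u_1 \in \beta(z_-)$, since $u_1$ lies in some bag of $\mu(x')$ and in $\beta(z_0)$, and the $H$-path between them passes through $z_-$. The essential use of appearance-universality is to rule out $u_1 \in \beta(z_+)$: if it held, $u_1$ would occupy at least three bags of $H$ (a bag of $\mu(x')$, $\beta(z_0)$, and $\beta(z_+)$), forcing $u_1$ into every bag and contradicting $u_1 \notin \beta'(y')$. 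A symmetric argument gives $u_2 \in \beta(z_+) \setminus \beta(z_-)$, contradicting the large-interiors hypothesis for $w$ at $z_0$ in $H$. The trickiest bookkeeping arises when $z_0$ is an endpoint of $\mu(z')$, so that $z_-$ or $z_+$ already sits outside $\mu(z')$; appearance-universality handles this case uniformly, and this is where I will be most careful.
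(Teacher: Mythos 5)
Your proofs of the first four clauses are essentially fine. The linked-decomposition clause requires a bit more care than ``concatenate endpoint-to-endpoint'' suggests: the $p$ paths in $L_{z_i}$ may revisit $A_{i-1}=\beta(z_{i-1})\cap\beta(z_i)$ and $A_i=\beta(z_i)\cap\beta(z_{i+1})$ internally, so to get genuine vertex-disjoint paths after concatenation you should first replace each path by the subpath from its last vertex in $A_{i-1}$ to its first subsequent vertex in $A_i$; with that truncation, bag connectivity forces any two consecutive truncated linkages to overlap only at the matching vertex of $A_i$, and the concatenation goes through.

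The genuine gap is in the large-interiors clause, and it cannot be closed because that part of the observation is actually false in the stated generality. Your argument considers only internal vertices of $H'$ that arise as internal vertices of $H$: you invoke the large-interiors hypothesis at some $z_0\in V(\mu(z'))$ to produce $w\in\beta(z_0)$ that is internal in $(H,\beta)$, and then verify the non-adjacency property for \emph{that} $w$. But the non-adjacency clause must be checked for \emph{every} internal vertex of $(H',\beta')$, and a vertex can be internal in $H'$ while lying in two consecutive bags of $H$ (both inside $\mu(z')$), so it is not internal in $H$ and the hypothesis says nothing about it. This is exactly where the statement fails. Take $G$ with $V(G)=\{u_1,v,u_2,w_1,w_2,w_3,w_4\}$ and $E(G)=\{vu_1,vu_2\}$, $H=h_1h_2h_3h_4$, and bags $\beta(h_1)=\{u_1,w_1\}$, $\beta(h_2)=\{u_1,v,w_2\}$, $\beta(h_3)=\{v,u_2,w_3\}$, $\beta(h_4)=\{u_2,w_4\}$. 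This is proper, linked, appearance-universal, and has large interiors (the internal vertices are the isolated $w_i$). Now coarsen by contracting the edge $h_2h_3$, so $\mu(z')=\{h_2,h_3\}$ while $\mu(x')=\{h_1\}$, $\mu(y')=\{h_4\}$. Then $v$ lies only in $\beta'(z')$, hence is internal in $H'$, yet its neighbor $u_1$ lies in $\beta'(x')\setminus\beta'(y')$ and its neighbor $u_2$ lies in $\beta'(y')\setminus\beta'(x')$, so the coarsening does not have large interiors. (This is a harmless slip in the paper: in the proof of Lemma~\ref{lemma-comb} the large-interiors property is produced last, via Lemma~\ref{lemma-lint}, and no coarsening is applied afterwards, so the offending clause of the observation is never actually invoked. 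Lemma~\ref{lemma-lint} itself works because its coarsening merges three consecutive bags, leaving a middle bag that insulates the internal vertices; a generic two-bag merge, as above, lacks that buffer.)
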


\begin{lemma}\label{lemma-tree-to-path}
Let $k,n\ge 1$ be integers.  If a graph $G$ has a proper tree decomposition $(T,\beta)$ of order at least $n^n$ with bags of size at most $k$, then
$G$ has a proper path decomposition of adhesion at most $k$ and order at least $n$.
\end{lemma}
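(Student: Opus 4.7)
The plan is to distinguish two structural cases for $T$ --- either $T$ contains a simple path on $n$ vertices, or $T$ has a vertex of degree at least $n$ --- and to exhibit a direct aggregation of bags that works in each case. The dichotomy will follow from a standard ball-counting estimate, exploiting the hypothesis $|V(T)| \geq n^n$.

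In the long-path case, given a path $P = v_1 v_2 \cdots v_n$ in $T$, I would let $T_i$ be the component of $T - E(P)$ containing $v_i$ and set $\beta'(v_i) := \bigcup_{x \in V(T_i)} \beta(x)$. The key tool in verifying both the adhesion bound and properness is tree-decomposition connectedness: for any $w \in V(G)$, if $w \in \beta'(v_i) \cap \beta'(v_{i+1})$, then the subtree $\{x \in V(T) : w \in \beta(x)\}$ meets both $T_i$ and $T_{i+1}$ and so must contain the whole path from $v_i$ to $v_{i+1}$ in $T$, forcing $\beta'(v_i) \cap \beta'(v_{i+1}) \subseteq \beta(v_i) \cap \beta(v_{i+1})$. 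This yields adhesion at most $k$ and, dually, lifts strict non-inclusion from $(T, \beta)$ to $(P, \beta')$. In the high-degree case, if $v$ has neighbors $u_1, \ldots, u_d$ with $d \geq n$ and branches $T_1, \ldots, T_d$ in $T - v$, I would set $\beta'(z_j) := \beta(v) \cup \bigcup_{x \in V(T_j)} \beta(x)$ for $1 \leq j < n$ and lump the remaining branches into $\beta'(z_n)$. Since the branches $T_j$ are vertex-disjoint in $T - v$, consecutive bags of $(P, \beta')$ intersect in exactly $\beta(v)$, giving adhesion $|\beta(v)| \leq k$, and each $\beta(u_j) \setminus \beta(v) \neq \emptyset$ by properness of $(T, \beta)$, which then gives properness of $(P, \beta')$.

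For the dichotomy, if $T$ has maximum degree at most $n - 1$ and no $n$-vertex path, then $T$ has diameter at most $n - 2$, so from a vertex of minimum eccentricity all of $T$ lies in a ball of radius $\lceil (n-2)/2 \rceil$, which has at most $2(n-1)^{\lceil (n-2)/2 \rceil} < n^n$ vertices --- a contradiction. The main obstacle I anticipate is verifying that the strict non-inclusion $\beta(v_i) \not\subseteq \beta(v_{i+1})$ from the original decomposition survives the aggregation and produces $\beta'(v_i) \not\subseteq \beta'(v_{i+1})$; this is precisely what the connectedness argument above achieves, since without it one would only be guaranteed $\beta(v_i) \neq \beta(v_{i+1})$ after aggregation, which is insufficient for properness.
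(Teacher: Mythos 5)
Your proof is correct and takes essentially the same approach as the paper's: the identical dichotomy between a high-degree vertex and a long subpath, and the identical bag-aggregation construction in each case. The paper merely asserts that a tree on $n^n$ vertices forces one of the two cases and that the aggregated decomposition is proper with adhesion at most $k$; you spell out the ball-counting estimate and the subtree-connectivity verification, but these are the same implicit steps.
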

\begin{proof}
If $T$ has a vertex $z$ of degree $m\ge n$, then let $T_1$, \ldots, $T_m$ be the components of $T-z$, let $H$ be a path $z_1z_2\ldots z_m$
and let $\gamma(z_i)=\beta(z)\cup \bigcup_{x\in V(T_i)} \beta(x)$.  Otherwise, $T$ contains a subpath $H$ with at least $n$ vertices; for $z\in V(H)$,
let $T_z$ be the component of $T-(V(H)\setminus\{z\})$ containing $z$, and let $\gamma(z)=\bigcup_{x\in V(T_z)} \beta(x)$.
In both cases, $(H,\gamma)$ is a proper path decomposition of $G$ of adhesion at most $k$.
\end{proof}

\begin{lemma}\label{lemma-linked}
There exists a function $f_{\text{link}}:Z^+_0\times Z^+\to Z^+$ as follows.
Let $p$ and $n$ be integers.  If $G$ has a proper path decomposition $(H,\beta)$ with adhesion at most $p$ of order at least $f_{\text{link}}(p,n)$,
then $G$ has a proper linked path decomposition of adhesion at most $p$ and order $n$.
\end{lemma}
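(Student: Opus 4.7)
The plan is to proceed by induction on the adhesion bound $p$. The base case $p=0$ is immediate: all adjacent-bag intersections are empty, the linkage condition is vacuous, and any such decomposition is already linked, so $f_{\text{link}}(0,n)=n$ works.

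For the inductive step, I will process a proper path decomposition $(H,\beta)$ of adhesion at most $p$ and very large order. First, I apply pigeonhole to the $p+1$ possible sizes of consecutive bag-intersections along $H$: if $H$ is long enough, there is a long subpath $P'$ on which all consecutive adhesions have a fixed size $q\le p$. I coarsen the two tails of $H$ outside $P'$ by absorbing them into the endpoints of $P'$. Because each vertex appears in a consecutive run of bags, this preserves every adjacent adhesion within $P'$ and yields a proper path decomposition of $G$ of order $|V(P')|$ in which every internal adjacent adhesion equals $q$. If $q<p$, the inductive hypothesis applied with parameter $p-1$ immediately produces the desired linked decomposition.

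The main case is $q=p$. Here, by Menger's theorem and the equality of all adjacent adhesions, the decomposition is linked exactly when it contains no broken bag. Let $k$ be the number of broken interior bags of $P'$. If $k$ is below an appropriate threshold, the broken bags partition the interior of $P'$ into at most $k+1$ sub-intervals, and the longest such sub-interval has length at least $n$ and contains no broken bag; coarsening all bags outside this sub-interval into its two endpoints yields a proper linked decomposition of $G$ of order at least $n$. If instead $k$ exceeds the threshold, I refine at each broken bag $z$: for a separation $(A,B)$ of $G[\beta(z)]$ of order less than $p$ as in the definition of broken bag, I replace $z$ by two consecutive bags $V(A)$ and $V(B)$. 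The resulting refinement has $k$ new adjacencies of adhesion strictly less than $p$; by pigeonhole on their $p$ possible values, at least $k/p$ of them share a common value $q'<p$. Coarsening the refined decomposition between these chosen edges (merging all bags that lie strictly between two consecutive chosen edges into a single bag) produces a proper path decomposition of $G$ of order at least $k/p+1$ in which every adjacent adhesion equals $q'<p$, so the inductive hypothesis completes the argument. Setting, for instance, $f_{\text{link}}(p,n)=(p+1)(n+1)\cdot p\cdot f_{\text{link}}(p-1,n)$ makes both sub-cases work quantitatively.

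The main technical obstacle I expect is verifying that each of the intermediate decompositions remains a \emph{proper} path decomposition of $G$. For coarsenings this follows from the fact that in a path decomposition a vertex's bag-appearances form a consecutive run, which rules out a merged bag being contained in a neighbor unless the containment already held before merging. For the splitting step at a broken bag, properness uses that each of $V(A)$ and $V(B)$ contains one of the two size-$p$ adjacent adhesions and is therefore strictly larger than the separator $V(A)\cap V(B)$ of size less than $p$, so neither of the two new bags is contained in the other. Any residual degeneracies, such as a newly created bag accidentally coinciding with an unchanged neighbor or a sub-interval endpoint colliding with an endpoint of $P'$, are absorbed by a further merge, costing only a constant factor in the order that is easily swallowed by $f_{\text{link}}(p,n)$.
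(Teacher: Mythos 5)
Your overall strategy is essentially the paper's: induct on the adhesion bound $p$, reduce to the case where all adjacent adhesions equal $p$, then analyze broken bags, handling ``few broken bags'' by extracting an unbroken run and ``many broken bags'' by splitting each broken bag along its small separator to manufacture edges of adhesion below $p$ and recurse. Your treatment of the ``many broken bags'' case is in fact more explicit than the paper's, which simply asserts that many broken bags yield a proper path decomposition of adhesion at most $p-1$; your splitting-then-pigeonhole-then-coarsen argument fills in that step correctly, including the properness check you flag.

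There is, however, a genuine error at the start of your inductive step. You claim that pigeonhole over the $p+1$ possible adhesion sizes yields ``a long subpath $P'$ on which all consecutive adhesions have a fixed size $q$,'' and you then act on this literally by only absorbing the two tails of $H$ outside $P'$. Pigeonhole gives no such subpath: the adhesion sizes along $H$ could alternate, e.g.\ $1,2,1,2,\ldots$, in which case no subpath of length two or more has constant adhesion, no matter how long $H$ is. What pigeonhole does give is a value $q$ occurring on at least $(|V(H)|-1)/(p+1)$ edges; the correct move (and what you need) is to \emph{coarsen between consecutive occurrences of that value}, merging every maximal block of bags lying between two chosen $q$-edges into a single bag. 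Because bag-appearances in a path decomposition are consecutive, each new adjacent adhesion equals the $q$-adhesion at the corresponding chosen edge, so the coarsening has all adjacent adhesions equal to $q$ and order at least $(|V(H)|-1)/(p+1)+1$; it is proper because coarsenings of proper decompositions are proper. Once you replace the ``subpath'' claim with this coarsening, the rest of your argument — the $q<p$ case by induction, the broken-bag count, the unbroken-run extraction, and the broken-bag refinement — goes through with your stated $f_{\text{link}}$. The paper sidesteps the issue differently: it counts edges of adhesion at most $p-1$ and either recurses if there are many, or merges them away to reach constant adhesion $p$ if there are few; this avoids the ``most common value'' detour, but your fixed version is equally valid.
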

\begin{proof}
Choose $f_{\text{link}}$ so that $f_{\text{link}}(0,n)=n$ and $f_{\text{link}}(p,n)=f_{\text{link}}(p-1,n)+(f_{\text{link}}(p-1,n)-2)(n-1)-1$
for every $p,n\ge 1$.  We prove the claim by the induction on $p$.

If $p=0$, then $(H,\beta)$ is linked and the claim holds trivially.
If there exist at least $f_{\text{link}}(p-1,n)-1$ edges $z_1z_2\in E(H)$ such that $|\beta(z_1)\cap \beta(z_2)|\le p-1$, then $(H,\beta)$ has
a coarsening with adhesion at most $p-1$ and order at least $f_{\text{link}}(p-1,n)$, and the claim follows by the induction hypothesis.
Hence, assume that there are at most $f_{\text{link}}(p-1,n)-2$ such edges, and thus there exists a coarsening $(H_1,\beta_1)$ of $(H,\beta)$
of order $f_{\text{link}}(p,n)-f_{\text{link}}(p-1,n)+2$ such that any two adjacent bags intersect in exactly $p$ vertices.  If there exist at least
$f_{\text{link}}(p-1,n)-1$ vertices $z$ of $H_1$ with broken bags, then $G$ has a proper path decomposition with adhesion at most $p-1$ of order
at least $f_{\text{link}}(p-1,n)$, and the claim follows by induction.  Otherwise, since $(H_1,\beta_1)$ has order greater than $(f_{\text{link}}(p-1,n)-2)(n-1)$,
it contains $n$ consecutive vertices with unbroken bags, and thus it has a coarsening of order $n$ in that no bag is broken.
This coarsening is a proper linked path decomposition of $G$.
\end{proof}

\begin{lemma}\label{lemma-appun}
Let $p\ge 0$ and $n\ge 1$ be integers.  Every path decomposition $(H,\beta)$ of a graph $G$ with adhesion at most $p$ of order at least $n^{p+1}$
has an appearance-universal coarsening of order $n$.
\end{lemma}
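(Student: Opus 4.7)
The plan is to prove the lemma by induction on $p$. For the base case $p=0$, zero adhesion forces each vertex of $G$ to appear in exactly one bag of $(H,\beta)$, so any coarsening to order $n$ (which exists since the order is at least $n^{0+1}=n$) is automatically appearance-universal.

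For the inductive step with $p\ge 1$, I would index the bags of $(H,\beta)$ as $z_1,\ldots,z_N$ with $N\ge n^{p+1}$, and write $A_i=\beta(z_i)\cap\beta(z_{i+1})$ for the $N-1$ adhesion sets, each of size at most $p$. A coarsening of $(H,\beta)$ of order $n$ is determined by a choice of $n-1$ cut indices $c_1<\cdots<c_{n-1}$, and its adhesion sets are precisely $A_{c_1},\ldots,A_{c_{n-1}}$. Because the set of bags containing any vertex $v$ is an interval of consecutive indices, the coarsening is appearance-universal if and only if $A_{c_1},\ldots,A_{c_{n-1}}$ form a \emph{sunflower}, meaning that all pairwise intersections coincide with a common core $K\subseteq V(G)$: vertices in $K$ then lie in every chunk, while vertices in exactly one $A_{c_j}$ occupy exactly two consecutive chunks.

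The argument splits into two cases. If at least $n^p-1$ of the sets $A_i$ have size strictly less than $p$, then selecting those indices as cuts yields a coarsening of order at least $n^p$ with adhesion at most $p-1$, to which the inductive hypothesis applies to produce the desired appearance-universal coarsening of order $n$. Otherwise, at least $n^p(n-1)+1$ of the $A_i$ have size exactly $p$, and the task reduces to extracting from these $n-1$ sets that form a sunflower; using them as cuts then finishes the inductive step.

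The main obstacle is this extraction, which I would establish as a sub-claim by a secondary induction on $p$: any sequence of at least $n^p(n-1)+1$ sets of size exactly $p$, in which every element has interval support, contains $n-1$ sets forming a sunflower. For the sub-inductive step, if some element $v$ has support of size at least $n^{p-1}(n-1)+1$, I would restrict to that consecutive block, remove $v$ from each set to obtain a sequence of sets of size $p-1$ with interval supports, apply the sub-induction, and adjoin $v$ to the resulting core. Otherwise every element has support of length at most $n^{p-1}(n-1)$, and a greedy choice of positions spaced $n^{p-1}(n-1)$ apart produces $n-1$ pairwise disjoint adhesion sets---a sunflower with empty core; the length bound $n^p(n-1)+1\ge(n-2)n^{p-1}(n-1)+1$ leaves enough room for all $n-1$ picks.
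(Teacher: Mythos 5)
Your proof is correct, but it takes a genuinely different route from the paper. The paper also inducts on $p$, but its case split is on whether some \emph{vertex} appears in at least $n^p$ bags: if so, it coarsens to a subpath where that vertex is universal, deletes the vertex to drop the adhesion to $p-1$, applies induction, and puts the vertex back; if not, every vertex spans at most $n^p-1$ bags and a blunt partition of $H$ into consecutive blocks of length $n^p$ already gives appearance-universality. Your argument instead classifies the \emph{adhesion sets} by size and, when most have size exactly $p$, extracts a sunflower among them via a secondary induction. The conceptual payoff of your version is the clean reformulation that a coarsening given by cut indices $c_1<\cdots<c_{n-1}$ is appearance-universal if and only if $A_{c_1},\ldots,A_{c_{n-1}}$ form a sunflower; this isolates what one actually needs and makes the combinatorics transparent. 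The paper's version is shorter and avoids the secondary sunflower induction, instead reusing the ``delete a universal vertex'' device that you apply only inside the sub-claim. Both arguments reach the same bound $n^{p+1}$; note that in both, the interval (Helly-on-a-path) structure of vertex supports is what makes the argument efficient relative to the generic sunflower lemma. One small point worth recording explicitly in your write-up: when you pass to the subsequence of indices $i$ with $|A_i|=p$, the interval-support property is preserved because if $v\in A_i\cap A_j$ with $i<j$ then $v\in A_k$ for all $i\le k\le j$ in the full sequence, hence also for all such $k$ lying in the subsequence; you invoke this implicitly.
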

\begin{proof}
We prove the claim by the induction on $p$.
If $p=0$, then every vertex appears in exactly one bag and $(H,\beta)$ is appearance-universal.
If a vertex $v$ appears in at least $n^p$ bags of the decomposition, then there exists a coarsening $(H_1,\beta_1)$
of order $n^p$ such that $v$ appears in all the bags.  Let $\beta'_1(z)=\beta(z)\setminus\{v\}$ for all $z\in V(H_1)$.
Then $(H_1,\beta'_1)$ is a path decomposition of $G-v$ of adhesion at most $p-1$, and by the induction hypothesis, it has a coarsening
$(H_2,\beta'_2)$ of order $n$ that is appearance-universal.  Let $\beta_2(z)=\beta(z)\cup\{v\}$ for all $z\in V(H_2)$.  Then $(H_2,\beta_2)$ is an
appearance-universal coarsening of $(H,\beta)$ of order $n$.

Hence, we can assume that every vertex appears in at most $n^p-1$ bags of the decomposition.
Let $(H',\beta')$ be the coarsening obtained by dividing $H$ into subpaths with $n^p$ vertices (plus possibly one shorter
path at the end) and merging the bags in the subpaths.  Then every vertex appears in at most two consecutive bags,
and thus $(H',\beta')$ is appearance-universal.
\end{proof}

\begin{lemma}\label{lemma-lint}
Let $n\ge 1$ be an integer.  Let $S\subseteq V(G)$.  Every proper appearance-universal path decomposition $(H,\beta)$ of order at least $3n$
has a coarsening of order $n$ with large interiors.
\end{lemma}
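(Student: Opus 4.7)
The plan is to form the coarsening by partitioning $V(H)$ into $n$ consecutive non-empty intervals $I_1,\ldots,I_n$, each of size at least $3$ (possible since $|V(H)|\ge 3n$), defining $\beta'(z_i)=\bigcup_{x\in I_i}\beta(x)$, and taking $H'$ to be the path on $z_1,\ldots,z_n$. The resulting $(H',\beta')$ is a coarsening of $(H,\beta)$ of order $n$, and a vertex $v$ is internal in $(H',\beta')$ precisely when all original bags containing $v$ lie in a single $I_i$. It remains to verify the two conditions defining large interiors.

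For the first condition, fix a non-terminal $z_i$ in $H'$ and pick $b\in I_i$ that is neither the leftmost nor the rightmost element of $I_i$, which is possible because $|I_i|\ge 3$. Properness of $(H,\beta)$ applied to the edge between $b$ and its predecessor $b'$ in $H$ yields some $v\in\beta(b)\setminus\beta(b')$. By appearance-universality, $v$ cannot be global (otherwise $v\in\beta(b')$), so its original bags form a set of size at most two contained in $\{b,b''\}$, where $b''$ is the successor of $b$ in $H$. Both $b$ and $b''$ lie in $I_i$ by the choice of $b$, so $v$ is internal in $(H',\beta')$ and lies in $\beta'(z_i)$.

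For the second condition, suppose for contradiction that some internal vertex $v$, belonging to super-bag $z_j$, has neighbors $u\in\beta'(z_{i-1})\setminus\beta'(z_{i+1})$ and $w\in\beta'(z_{i+1})\setminus\beta'(z_{i-1})$ for some non-terminal $z_i$. Since $u$ must share an original bag with $v$ (hence has a bag in $I_j$) and also appears in some bag of $I_{i-1}$, appearance-universality forces $u$ to be either global --- which would place $u\in\beta'(z_{i+1})$, a contradiction --- or supported on at most two consecutive original bags, restricting its super-bag support to at most two consecutive super-bags. The analogous constraint on $w$, whose support must meet both $I_j$ and $I_{i+1}$, then rules out every choice of $j$ except $j=i$. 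In that remaining case, $u$ must occupy the last bag of $I_{i-1}$ and the first bag of $I_i$ (call it $a_i$), while $w$ must occupy the last bag of $I_i$ (call it $c_i$) and the first bag of $I_{i+1}$. Hence $v\in\beta(a_i)\cap\beta(c_i)$, and since $|I_i|\ge 3$ places $a_i$ and $c_i$ at distance at least two in $H$, the bag interval of $v$ spans at least three consecutive original bags, forcing $v$ to be global --- contradicting its internality.

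The main obstacle is the bookkeeping in the second step: one must rule out each of the cases $j\in\{i-2,i-1,i+1,i+2\}$ before isolating $j=i$, each time appealing to the fact that appearance-universality prevents a non-global vertex from spanning more than two consecutive super-bags once $|I_i|\ge 3$. This chunk-size threshold of $3$ is exactly what matches the two-bag span cap provided by appearance-universality, which is why the hypothesis of order at least $3n$ appears rather than a weaker bound.
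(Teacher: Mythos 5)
Your proof is correct and follows essentially the same route as the paper: partition $H$ into $n$ consecutive blocks of at least three bags each, then use appearance-universality together with properness to exhibit an internal vertex in each interior block and to show that an internal vertex cannot simultaneously have neighbors reaching into both adjacent blocks without some vertex spanning three consecutive original bags and hence being global. Your case analysis for the second condition (ruling out $j\in\{i-2,\ldots,i+2\}\setminus\{i\}$ before pinning down the shared bags) is a bit longer than the paper's, which fixes $v\in\beta'(z_i)$, observes $v$ misses one of the two extreme bags of its block, and argues directly from that, but the two arguments are interchangeable.
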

\begin{proof} Divide $H$ into subpaths with three vertices and merge the bags
in each subpath, obtaining a coarsening $(H_1,\beta_1)$ of order $n$.  We claim that this coarsening has large interiors.

Indeed, suppose that $z_0z_1z_2z_3z_4$ is a subpath of $H$ and consider a vertex $z\in V(H)$ such that $\beta_1(z)=\beta(z_1)\cup \beta(z_2)\cup\beta(z_3)$.
If a vertex $v$ belongs both to $\beta(z_2)$ and $\beta(z_0)$, then also $v\in\beta_1(z_1)$, and since $(H,\beta)$ is appearance-universal,
it follows that $v$ belongs to all its bags.  Similarly, any vertex in $\beta(z_2)\cap\beta(z_4)$ belongs to all bags.  Since $(H,\beta)$ is proper,
some vertex of $\beta(z_2)$ does not belong to all bags, and thus it is internal in $(H_1,\beta_1)$.

Let $z'$ and $z''$ be the neighbors of $z$ in $H_1$.  Consider now an internal vertex $v\in \beta_1(z)$.
Since $v$ is internal in $(H_1,\beta_1)$, it does not appear in all bags of $(H,\beta)$, and by appearance-universality, $v$ appears in at most two consecutive bags of $(H,\beta)$.
Consequently, $v$ does not appear in the bags of both $z_1$ and $z_3$, and by symmetry, we can assume that $v\not\in\beta(z_3)$.
Hence, any neighbor of $v$ in $\beta(z_4)$ must belong to $\beta(z_2)$, and by appearance-universality it must belong to all bags.  It follows that $v$ does not have a neighbor both in
$\beta_1(z')\setminus\beta_1(z'')$ and in $\beta_1(z'')\setminus\beta_1(z')$.
% Similarly, no edge has one end in $\beta_1(z')\setminus\beta_1(z'')$ and the other end in $\beta_1(z'')\setminus\beta_1(z')$.
\end{proof}

A $4$-tuple $(F,M,l,r)$, where $F$ is a graph, $M$ is a union of $p$ pairwise vertex-disjoint paths in $F$, and $l,r:[p]\to V(F)$ are injective functions such that for $i=1,\ldots,p$,
$l(i)$ and $r(i)$ are the two endpoints of one of the paths of $M$, is called an \emph{extended bag of adhesion $p$}.
Let $(H,\beta)$ be a linked path decomposition of a graph $G$.  For each $z\in V(H)$ with two neighbors $x$ and $y$, fix a $(\beta(z)\cap\beta(x))-(\beta(z)\cap\beta(y))$ linkage $L_z$
in $G[\beta(z)]$.  Let $H_0$ be the path obtained from $H$ by removing its endpoints.
Let $L$ be the union of the linkages $L_z$ over all $z\in V(H_0)$.  Note that $L$ is the disjoint union of paths $L_1$, \ldots, $L_p$.
Order the path $H$ arbitrarily, and let $z$ be a vertex of $H_0$ whose predecessor in $H$ is $x$ and whose successor in $H$ is $y$.
Let $l_z:[p]\to\beta(z)$ and $r_z:[p]\to\beta(z)$ be functions defined so that $l_z(i)$ is the vertex of $L_i$ belonging to $\beta(z)\cap\beta(x)$ and $r_z(i)$ is the vertex of $L_i$ belonging to $\beta(z)\cap\beta(y)$.
The 4-tuple $E_z=(G[\beta(z)],L_z,l_z,r_z)$ is the \emph{extended bag of $z$}.  A \emph{finite extended bag property} is a function that to each extended bag of adhesion $p$ assigns an element of a finite set $X_p$.
By the pigeonhole principle, we have the following.

\begin{observation}\label{obs-prop}
For all integers $p\ge 0$ and $n\ge 1$ and for any finite extended bag property $\pi$, there exists an integer $N$ as follows.
For any linked decomposition $(H,\beta)$ of adhesion at most $p$ and any $Z \subseteq V(H)$ such that $|Z| \geq N$, there exists a set $U\subseteq Z$ of size $n$
such that $\pi(E_x)=\pi(E_y)$ for all $x,y\in U$.
\end{observation}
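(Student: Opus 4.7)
The statement is a pure pigeonhole claim once the setup is unpacked. By definition, a finite extended bag property $\pi$ takes values in some finite set $X_p$ depending only on the adhesion $p$. Hence, once a linked decomposition $(H,\beta)$ of adhesion at most $p$ has been fixed together with the auxiliary data (a choice of linkage $L_z$ at each internal vertex $z$ and an orientation of $H$) needed to define the extended bag $E_z$ of each $z\in V(H_0)$, the map $z \mapsto \pi(E_z)$ is a well-defined function $V(H_0) \to X_p$.

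The plan is therefore to set $N := |X_p|(n-1)+3$. Given any linked decomposition $(H,\beta)$ of adhesion at most $p$ and any $Z\subseteq V(H)$ with $|Z|\ge N$, first I would replace $Z$ by $Z':=Z\cap V(H_0)$, discarding the (at most two) endpoints of $H$ for which the extended bag is not defined. This leaves $|Z'|\ge |X_p|(n-1)+1$. By the pigeonhole principle applied to $\pi$ restricted to $\{E_z : z\in Z'\}$, some value $c\in X_p$ is attained at least $n$ times; picking any $n$-element subset $U$ of $\{z\in Z' : \pi(E_z)=c\}$ yields the required set.

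There is essentially no obstacle beyond bookkeeping: the only substantive point is that the value $\pi(E_z)$ really is a property of $z$ alone (relative to the fixed linkages and orientation), so that pigeonhole applies to the finite-valued function $z\mapsto\pi(E_z)$; everything else is the choice of $N$ made large enough in terms of $|X_p|$ and $n$.
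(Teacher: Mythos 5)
Your proof is correct and follows exactly the argument the paper indicates (the paper simply invokes the pigeonhole principle without spelling out the bookkeeping). The only minor wrinkle you handle correctly is discarding the (at most two) endpoints of $H$, where $E_z$ is undefined, and the resulting bound $N=|X_p|(n-1)+3$ works.
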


We are now ready to start working towards the proof of Theorem~\ref{thm-pbndtw}.  Let us start with a lemma that enables us to obtain
many $t$-islands separated from the rest of the graph by small cuts.

\begin{lemma}\label{lemma-main-tw}
For all integers $p\ge 0$ and $t,m,l\ge 1$, there exists an integer $N\ge 1$ as follows.
Let $(H,\beta)$ be a linked path decomposition of a graph $G$,
with large interiors, adhesion at most $p$ and order at least $N$.
Then either $G$ contains $K_{t,m}$ or $I_{t-1}+P_m$ as a minor, or there exists  a subpath $H'$ of $H$  of length $l$ such that the
internal vertices of the bag of $z$ form a $t$-island in $G$ for every $z \in V(H')$.
\end{lemma}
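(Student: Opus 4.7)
The plan is to apply Observation~\ref{obs-prop} with a carefully chosen finite extended-bag property $\pi$ to extract a long subpath $H_2 \subseteq H$ along which every bag has identical local structure, then either read off $H'$ directly or build the forbidden minor using the linkage threads. For $\pi(E_z)$ I would record, for each pair of subsets $(A,B) \in 2^{[s]} \times 2^{[s]}$ (where $s$ is the constant adhesion), whether there exists a vertex $v \in \beta(z) \setminus (l_z([s]) \cup r_z([s]))$ whose neighborhood among the linkage endpoints is exactly $l_z(A) \cup r_z(B)$. This is a finite property, and Observation~\ref{obs-prop} with $N$ sufficiently large gives the uniform subpath $H_2$.

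The key structural observation is that the set $U$ of universal vertices of $(H,\beta)$ (those appearing in every bag, of size $\le p$) satisfies $U \subseteq S_{i-1} \cap S_i$ for each bag $z_i$, so by disjointness of the $s$ linkage paths within each bag every $u \in U$ must form its own \emph{trivial} (single-vertex) thread. Hence the set of trivial-thread indices $J_U \subseteq [s]$ is constant on $H_2$ and in bijection with $U$, every non-trivial thread is vertex-disjoint from $U$, and the non-internal vertices of $\beta(z_i)$ are exactly the linkage endpoints $l_{z_i}([s]) \cup r_{z_i}([s])$. By uniformity of $\pi$, either no bag of $H_2$ contains an internal vertex with $\ge t$ non-internal neighbors --- in which case each bag's internal vertex set (nonempty by large interiors) is a $t$-island and a length-$l$ subpath of $H_2$ is the desired $H'$ --- or every bag contains such a ``violating'' vertex $v_i$ with a common neighbor type $(J^L, J^R) \subseteq [s] \times [s]$. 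Writing $a = |J^L \cap J_U| = |J^R \cap J_U|$ (the universal neighbors), $b = |J^L \setminus J_U|$, $c = |J^R \setminus J_U|$, we have $a+b+c \ge t$, and the large-interiors hypothesis forbids $v_i$ from having neighbors both in $S_{i-1} \setminus U$ and in $S_i \setminus U$, so WLOG $c = 0$ and $b \ge t-a$.

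If $a \ge t$, the $a$ universal vertices indexed by $J^L \cap J_U$ together with any $m$ of the $v_i$'s form $K_{t,m}$ as a subgraph (the $v_i$ are pairwise nonadjacent, being internal to distinct bags). Otherwise $a < t$ and $b \ge 1$; pick any $j^* \in J^L \setminus J_U$ (a non-trivial thread) and form, over $m$ consecutive bags of $H_2$, the path branch sets $B_i := \{v_i\} \cup (L_{j^*} \cap \beta(z_i)) \setminus \{r_{z_i}(j^*)\}$ together with apex branch sets consisting of the $a$ singletons $\{l_{z_i}(j) : j \in J^L \cap J_U\}$ and $t-1-a$ non-trivial threads $L_j$ for $j \in (J^L \setminus J_U) \setminus \{j^*\}$; the latter is possible since $a + (b-1) \ge t-1$. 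Each $B_i$ is connected (via $v_i \sim l_{z_i}(j^*)$ along the thread subpath), consecutive $B_i, B_{i+1}$ are joined by the edge of $L_{j^*}$ ending at $r_{z_i}(j^*) = l_{z_{i+1}}(j^*) \in B_{i+1}$, and because every non-trivial thread is disjoint from $U$ while distinct threads are pairwise disjoint, all branch sets are pairwise disjoint; each apex contains a vertex adjacent to $v_i \in B_i$. This realizes $I_{t-1} + P_m$ as a minor.

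The main obstacle is choosing $\pi$ so that the finite pigeonhole of Observation~\ref{obs-prop} automatically captures the universal-adjacency data (which is \emph{a priori} a global property of the decomposition), and then verifying the disjointness of branch sets in the $I_{t-1} + P_m$ construction. The critical simplification is that universals and trivial threads coincide: the uniform extended-bag type therefore already records which linkage endpoints are universal, and non-trivial threads are forced to avoid $U$, which is precisely what decouples the apex and path branch sets.
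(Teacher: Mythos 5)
Your high-level plan matches the paper's: apply Observation~\ref{obs-prop} to a finite extended-bag property, then either read off the good subpath $H'$ or build one of the two forbidden minors from the linkage threads. However, there are several genuine gaps.

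\textbf{The output of Observation~\ref{obs-prop} is a set, not a subpath.} You write that it extracts ``a long subpath $H_2\subseteq H$,'' but the observation only supplies a set $U\subseteq Z$ of bags with equal type; the bags in $U$ need not be consecutive in $H$. This undermines two of your steps. First, in the case where no bag of ``$H_2$'' contains a violating internal vertex, you conclude that ``a length-$l$ subpath of $H_2$ is the desired $H'$,'' which makes no sense for a non-contiguous set. The paper handles this by first letting $Z$ be the set of bags whose interiors fail to be $t$-islands, observing that if no good $H'$ exists then $|Z|\geq (N-2)/l$ (simple covering), and only then applying Observation~\ref{obs-prop} \emph{to $Z$}. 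Second, in your $I_{t-1}+P_m$ construction you take ``$m$ consecutive bags of $H_2$'' and set $B_i:=\{v_i\}\cup(L_{j^*}\cap\beta(z_i))\setminus\{r_{z_i}(j^*)\}$, claiming $r_{z_i}(j^*)=l_{z_{i+1}}(j^*)$. That identity holds only when $z_i$ and $z_{i+1}$ are adjacent in $H$, which nothing guarantees. When they are not adjacent, $B_i$ and $B_{i+1}$ are not joined and the branch sets are not connected through $L_{j^*}$. The paper sidesteps this by contracting \emph{the entire segments of $L_1$ between consecutive $v_{z_i}$'s along $L_1$}; this works for an arbitrary set of chosen bags.

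\textbf{Your $\pi$ does not record whether $v_i$ lies on a linkage thread, so your branch sets can overlap.} In the $I_{t-1}+P_m$ case you take as apex branch sets the threads $L_j$ for $j\in(J^L\setminus J_U)\setminus\{j^*\}$. If the violating internal vertex $v_i$ happens to lie on one of these apex threads $L_j$ (which is entirely possible --- an interior vertex of a linkage path that is used only in bag $z_i$ is internal), then $v_i\in L_j$ and also $v_i\in B_i$, and the model fails. The paper's $\pi$ explicitly records the index $\sigma_z$ of the thread (if any) containing $v_z$, and then normalizes to $\sigma_z\in\{0,1,t+1\}$, choosing the apex threads to avoid $\sigma_z$; you need an analogous mechanism.

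\textbf{The universal/trivial-thread reduction is unnecessary and subtly flawed.} You argue that $S_{i-1}\setminus U\subseteq\beta(x)\setminus\beta(y)$, i.e., that any non-universal vertex in $\beta(z)\cap\beta(x)$ avoids $\beta(y)$. Without appearance-universality (which the lemma does not assume), a vertex can lie in $\beta(x)\cap\beta(z)\cap\beta(y)$ without being universal --- it is a \emph{local} trivial thread at $z$, but its global thread $L_j$ is non-trivial, so $j\notin J_U$. Such a vertex is a counterexample to your inclusion, and the $b$--$c$ dichotomy you derive from it does not follow from large interiors as stated. The paper avoids the universal-vertex bookkeeping entirely: it records only the thread index of each of $v_z$'s $t$ non-internal neighbors, and shows these $t$ indices are \emph{pairwise distinct} using only disjointness of the linkage paths inside $\beta(z)$ together with the large-interiors condition, never invoking universality. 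The $K_{t,m}$ apexes are then obtained by contracting whole threads $L_1,\ldots,L_t$ (trivial or not --- it does not matter), which is both simpler and correct.
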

\begin{proof}
Suppose that there does not exist  a subpath $H'$ of $H$ as above.	
Let $Z$ be the set of vertices of $H$ of degree two such that the internal vertices of the bag of $z$ do not form a $t$-island in $G$. Then $|Z| \geq (N-2)/l$.
Consider any vertex $z\in Z$ of degree two, and let $(G[\beta(z)],L_z,l_z,r_z)$ be its extended bag.
Since the internal vertices of the bag of $z$ do not form a $t$-island, there exists an internal vertex $v_z\in \beta(z)$
with at least $t$ non-internal neighbors.  Choose $t$ of the non-internal neighbors $v_1$, \ldots, $v_t$, and let
$\pi(z)=(\sigma_1,\ldots,\sigma_t,\sigma_z)$, where for $i=1,\ldots,t$, we have $\sigma_i=j$ if $v_i=l_z(j)$ or $v_i=r_z(j)$,
and $\sigma_z=j$ if $z$ lies on the path in $L_z$ with ends $l_z(j)$ and $r_z(j)$, and $\sigma_z=0$ otherwise.
Observe that $\sigma_1$, \ldots, $\sigma_t$ are distinct, since $H$ has large interiors.
Note that $\pi$ is a finite extended bag property, and by Observation~\ref{obs-prop}, we can assume that there exist
distinct vertices $z_1$, \ldots, $z_m$ in $V(H)\setminus Z$ of degree two such that $\pi(z_1)=\pi(z_2)=\ldots=\pi(z_m)$.
Without loss of generality, we can assume that $\pi(z_1)=(1,2,\ldots,t,a)$, where $a\in \{0,1,t+1\}$.
Let $L$ be the union of the linkages $L_z$ over all vertices $z\in V(H)$ of degree two, and let $L_1$, \ldots, $L_p$ be the paths of $L$.

If $a\in \{0,t+1\}$, then contract each of the paths $L_1$, \ldots, $L_t$ to a single vertex $x_1$, \ldots, $x_t$; since $v_{z_1}$, \ldots, $v_{z_m}$
are adjacent to all of $x_1$, \ldots, $x_t$, we obtain $K_{t,m}$ as a minor of $G$.

If $a=1$, then contract each of the paths $L_2$, \ldots, $L_t$ to a single vertex $x_2$, \ldots, $x_t$,
and contract the parts of the path $L_1$ between vertices $v_{z_1}$, \ldots, $v_{z_m}$
so that they form a path on $m$ vertices.  We obtain $I_{t-1}+P_m$ as a minor of $G$.
\end{proof}

We utilize Lemma~\ref{lemma-main-tw} in an inductive argument as follows.

\begin{lemma}\label{lemma-comb}
For all integers $k\ge 0$ and $m,t\ge 1$, there exist real $\eps,C >0$  as follows.  Let $G$ be a graph with at least $C$
vertices, and let $S\subseteq V(G)$ be such that $|S| \leq  \eps|V(G)|+2k+3$.  If $G$ has tree-width at most $k$ and contains neither $K_{t,m}$ nor $I_{t-1}+P_m$ as a minor,
then $G$ contains a $t$-island disjoint from $S$.
\end{lemma}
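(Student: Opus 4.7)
Plan: We prove Lemma~\ref{lemma-comb} by splitting into two cases according to the relation between $t$ and $k$, combining a simple density argument with the full path-decomposition machinery and a pigeonhole on $S$.

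For the easy case $t \ge k+1$: since $G$ has treewidth at most $k$, one has $|E(G)| \le k|V(G)|$, so $e_G(V(G) \setminus S) \le k|V(G)|$. Choosing $\varepsilon < 1/(k+1)$ and $C$ large enough that $|S| \le \varepsilon|V(G)|+2k+3 < |V(G)|/(k+1)$, we obtain $k|V(G)| < (k+1)(|V(G)|-|S|) \le t(|V(G)|-|S|)$, so $V(G)\setminus S$ is a $t$-enclave. By Lemma~\ref{lemma-enclave} it contains a $t$-island, disjoint from $S$.

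For the main case $t \le k$: take a proper tree decomposition $(T,\beta)$ of $G$ of width at most $k$. By routine smoothing arguments we may assume $|V(T)| \ge |V(G)|/(k+1)$, which, by taking $C$ large, becomes as large as needed. Apply Lemma~\ref{lemma-tree-to-path} to extract a proper path decomposition of $G$ of adhesion at most $k+1$ and of large order; then apply Lemmas~\ref{lemma-linked}, \ref{lemma-appun}, and \ref{lemma-lint} successively to refine it into a proper linked appearance-universal path decomposition $(H,\gamma)$ with large interiors, adhesion at most $k+1$, and order at least the threshold required by Lemma~\ref{lemma-main-tw} for the length $l$ chosen below. By Lemma~\ref{lemma-main-tw}, using that $G$ contains neither $K_{t,m}$ nor $I_{t-1}+P_m$ as a minor, there exists a subpath $H'$ of $H$ of length $l$ such that for every $z \in V(H')$ the internal vertices of $\gamma(z)$ form a $t$-island in $G$. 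These $l$ internal-vertex sets are pairwise disjoint, giving $l$ pairwise disjoint $t$-islands in $G$. By appearance-universality every vertex of $S$ belongs to the internal-vertex set of at most one bag of $H$, so at most $|S|$ of these islands meet $S$; if $l > |S|$, at least one of them is disjoint from $S$, as required.

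The main obstacle is that the chain from Lemma~\ref{lemma-tree-to-path} onward is very lossy (Lemma~\ref{lemma-tree-to-path} only guarantees path order $n$ from tree order $n^n$), so the length $l$ directly attainable from the machinery grows roughly like $\log|V(G)|/\log\log|V(G)|$, whereas $|S|$ can be of order $\varepsilon|V(G)|$. To resolve this I would iterate: apply the machinery to locate a $t$-island; if it happens to meet $S$, delete it from $G$ and re-apply the lemma to the resulting induced subgraph, whose treewidth is still at most $k$ and which still avoids the forbidden minors. The additive $2k+3$ slack in the bound on $|S|$ is precisely what absorbs the small boundary that must be added to $S$ during each such deletion, and the choice $\varepsilon \le 1/(k+1)$ guarantees that the number of iterations stays below the point where $|V(G)|$ would fall below the threshold needed to run the machinery again, so the process must at some stage produce a $t$-island disjoint from $S$.
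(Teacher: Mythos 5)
The first case ($t\ge k+1$) is a fine density shortcut, though the paper does not need it. The problem is in your main case: you correctly observe that one application of Lemma~\ref{lemma-main-tw} yields only an $l$ (roughly $\log|V(G)|/\log\log|V(G)|$, certainly far less than $\varepsilon|V(G)|$) of pairwise disjoint islands, which is not enough to dodge $S$, and you propose to iterate by deleting a single island that meets $S$ and adding its boundary to $S$. This iteration does not work as stated. When you delete the internal vertices $I_z$ of one bag and add the boundary $\beta(z)\setminus I_z$ to $S$, the boundary can have up to $2k+2$ vertices while the island may contain only a single vertex of $S$. So $|S|$ can grow by as much as $2k+1$ per step, while $\varepsilon|V(G)|$ only decreases; the invariant $|S|\le\varepsilon|V(G)|+2k+3$ is quickly destroyed, and the additive $2k+3$ slack absorbs only one such blowup, not arbitrarily many. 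Your claim that $\varepsilon\le 1/(k+1)$ suffices is also off: the $\varepsilon$ one can afford has to depend on the constant $C$ coming from the decomposition machinery, and is in fact much smaller.

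The paper's induction avoids precisely this problem. First, there is a preliminary case split on whether $G$ admits a separation of order $\le k+1$ with both sides of size $\ge C$ (if so, recurse into one side); the absence of such a separation forces all but at most one bag of the path decomposition to have size $<C$, which is essential to bound the number of vertices removed. Second, Lemma~\ref{lemma-main-tw} is invoked with $l=4k+6$, and one then removes a block of $2k+3$ \emph{consecutive} bags all of whose interiors meet $S$. This deletes at least $2k+3$ vertices of $S$ while adding at most $2k+2$ boundary vertices, so $|S|$ \emph{strictly decreases}; and since the removed bags have size $<C$, at most $(2k+3)C$ vertices are deleted, which is compensated by the choice $\varepsilon=\tfrac{1}{(2k+3)C}$. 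Your proposal is missing both the separation/large-bag control and the observation that one must strip off a whole segment of $2k+3$ bags at once, not a single island, to make the accounting close. Without these two ideas the iteration fails quantitatively.
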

\begin{proof}
Let $n_4$ be chosen so that Lemma~\ref{lemma-main-tw} holds for $p=k+1$, $t$, $m$, $l=4k+6$ and $N=n_4$.
Let $n_3=3n_4$, $n_2=n_3^{k+2}$, $n_1=f_{\text{link}}(k+1,n_2)$, and $C=(k+1)n_1^{n_1}$, and $\eps=\frac{1}{(2k+3)C}$. 
 
We prove that the lemma holds for these values of $\eps$ and $C$ by induction on the number of vertices of $G$.
Suppose that $G$ has a separation $(G_1,G_2)$ of order at most $(k+1)$ such that $|V(G_i)| \geq C$ for $i=1,2$.
Let $S_i = (S  \cap V(G_i)) \cup (V(G_1) \cap V(G_2))$ for $i=1,2$. If $|S_i| \leq \epsilon|V(G_i)|+2k+3$ for
some $i \in \{1,2\}$ then the lemma holds by the induction hypothesis applied to $G_i$. Otherwise, we have
$$|S| \geq |S_1|+|S_2| - 2(k+1) \geq \eps|V(G)|+2(2k+3)-2(k+1) >  \eps|V(G)|+2k+3,$$
a contradiction.
Thus we assume that no separation as above exists.

The graph $G$ has a proper tree decomposition $(T,\beta)$ with bags of size at most $k+1$.  Clearly, the decomposition has
order at least $|V(G)|/(k+1)\ge C/(k+1)=n_1^{n_1}$.  Hence, by Lemma~\ref{lemma-tree-to-path}, $G$ has a proper path decomposition of adhesion
at most $k+1$ and order $n_1$.  By Lemma~\ref{lemma-linked}, $G$ has a proper linked path decomposition of adhesion at most $k+1$
and order $n_2$.  By Lemma~\ref{lemma-appun}, this decomposition has an appearance-universal coarsening of order $n_3$,
and by Lemma~\ref{lemma-lint}, this decomposition can be further coarsened to a decomposition $(H,\beta)$ of order $n_4$ with large interiors.
By Lemma~\ref{lemma-main-tw}, there exists a subpath $H'$ of $H$  of length $l$ such that the set internal vertices $I_z$ of the bag of $z$ form a $t$-island in $G$ for every $z \in V(H')$.  If 
$I_z \cap S = \emptyset$ for some $z \in V(H')$ then $I_z$ is the required $t$-island, and so we assume  $I_z \cap S \neq \emptyset$ for every such $z$.
In particular, $|S|\ge 4k+6$, and thus $|V(G)|\ge (|S|-2k-3)/\eps\ge (2k+3)/\eps\ge(2k+3)^2C$.

Note that by our earlier assumption on the existence of certain separations,  we have that $|\beta(z)| \geq C$ for at most one $z \in V(H')$, and therefore we can select
a subpath  $H''$ of $H'$  of length $l/2=2k+3$ 
such that  $|\beta(z)| < C$ for every $z \in V(H'')$. Let $X=\bigcup_{z \in V(H'')}\beta(z)$ and
$Y= \bigcup_{z \in V(H)-V(H'')}\beta(z)$. Then $|X \cap Y| \leq 2k+2$, $|X| \leq (2k+3)C$, and $|S \cap (X \setminus Y)| \geq 2k+3$ by our assumptions.
Let $G'=G[Y]$ and $S'= (S \cap Y) \cup (X \cap Y)$. We have
\begin{align*}
|S'| &\leq |S|-1 =|S|-\eps(2k+3)C \le |S|-\eps|X|\\
&\leq \eps|V(G)|+2k+3-\eps|X| \leq \eps|V(G')|+2k+3.
\end{align*}
Since $|V(G)|\ge (2k+3)^2C$, we have $|V(G')|\ge |V(G)|-|X|\ge (2k+3)(2k+2)C>C$.
By the induction hypothesis, $G'$ contains a $t$-island $I$ disjoint from $S'$. Clearly, $I$ is a $t$-island in $G$ disjoint from $S$, as desired.
\end{proof}

The main result now readily follows.

\begin{proof}[Proof of Theorem~\ref{thm-pbndtw}]
By Theorem~\ref{thm-percol1} we have $\text{pcol}_\star(\GG)=p(\GG)$.  
By Observation~\ref{obs-basgr} we have $\text{pcol}_\star(\GG) \geq \text{col}_\star(\GG) \geq t$.
Finally, by Lemma~\ref{lemma-comb} we have $p(\GG) \leq t$ (if $G\in \GG$ has less than $C$ vertices,
then $G$ is $1/C$-resistant to $t$-percolation for every $t\ge 1$).
\end{proof}

\section{Concluding remarks}

In this paper we studied improper colorings of minor-closed graph classes, where we used the size of the maximum monochromatic component as a measure of impropriety. One can attempt building a similar theory for other impropriety measures as follows.  We say that a graph parameter $f$ with values in $\bb{R}_+ \cup \{+\infty\}$ is \emph{connected} if for a disconnected graph $G$ with components $G_1,G_2,\ldots,G_k$ we have $f(G)=\max_{1 \leq i \leq k}f(G_i)$. 

Let $f$ be a connected monotone graph parameter. 
  The \emph{$f$-chromatic number} $\chi_{f}(\GG)$  of a graph class $G$ as the minimum $t$ such that there exists real $C$ satisfying the following. For every $G  \in \mc{G}$ there exists a partition $V_1,\ldots,V_t$ of $V(G)$ so that $f(G[V_i]) \leq C$ for every $1 \leq i \leq t$. For example, Theorem~\ref{thm-devos} implies that $\chi_{\brm{tw}}(\GG) \leq 2$ for every minor-closed class $\GG$. 

One can also define the \emph{$f$-list chromatic number} $\chi^l_{f}(\GG)$  and the \emph{$f$-coloring number} $\col_{f}(\GG)$  of a graph class $G$ analogously to the clustered list chromatic and coloring numbers. The inequalities $\col_f(\mc{G})  \geq \chi^l_f(\mc{G}) \geq \chi_f(\mc{G})$ hold for any choice of $f$. 

Note that considering $f$ to be a connected graph parameter defined by $f(G)=1$, if $G$ is edgeless, and $f= +\infty$, otherwise,  one recovers from the definitions above the ordinary chromatic, list-chromatic numbers, and the coloring number.

The graph parameter $\Delta$ (maximum degree) is particularly well-studied in this context.  Edwards et al.~\cite{EKKOS15} have shown that $\chi_\Delta(\mc{X}(K_t))=t-1$. Ossona de Mendez, Oum and Wood~\cite{OOW16} characterized minor closed graph classes with given $\Delta$-list chromatic number as follows.

\begin{theorem}\label{thm-Deltalist}
	A minor-closed class of graphs $\GG$ satisfies $\chi^l_\Delta(\GG)\le t$ if and only if there exists $m\ge 1$ such that
	$K_{t,m}\not\in \GG$.
\end{theorem}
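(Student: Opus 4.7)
The plan is to prove the two directions separately. For necessity, I would extend Lemma~\ref{lem-listksm} to the $\Delta$-bounded setting. Assuming $\chi^l_\Delta(\GG) \leq t$ with monochromatic-max-degree bound $C$, work on $K_{t,m}$ for large $m$. Give the small side $a_1,\ldots,a_t$ pairwise disjoint lists $L_1,\ldots,L_t$ of size $t$; any $L$-coloring picks pairwise distinct $c_i \in L_i$. For each tuple $(c_1,\ldots,c_t) \in L_1 \times \cdots \times L_t$, reserve a block of $tC+1$ vertices on the $m$-side with common list $\{c_1,\ldots,c_t\}$; this requires $m \geq t^t(tC+1)$. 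In the block matching the actual choice $(c_1,\ldots,c_t)$, pigeonhole forces some color $c_j$ to be used on more than $C$ vertices, all adjacent to $a_j$, producing a monochromatic star at $a_j$ of max degree exceeding $C$---contradiction.

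For sufficiency, suppose some $m\ge 1$ has $K_{t,m}\notin\GG$. I would prove by induction on $|V(G)|$ that, for an appropriate $C=C(t,m)$, every such $G$ admits an $L$-coloring of monochromatic max degree at most $C$ for every $t$-list assignment $L$. The inductive step would locate a vertex $v$ together with a distinguished set $D \subseteq N(v)$ of at most $t-1$ ``dangerous'' neighbors; color $G-v$ by the induction hypothesis, then assign to $v$ any color of $L(v)$ not used on $D$. Since $|L(v)| = t > |D|$ such a color exists, and by the design of $D$ the only color classes that can grow at $v$ or at a neighbor of $v$ are ones already controlled in the inductive coloring.

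The crux is the structural claim that every $K_{t,m}$-minor-free graph with enough vertices contains such a $v$. Two natural attacks: (a) combine the Kostochka--Prince density bound for $K_{t,m}$-minor-free graphs with a ``fan-routing'' argument---if more than $t-1$ neighbors of a low-degree $v$ were centers of large vertex-disjoint fans in $G-v$, one could route a $K_{t,m}$ minor through $v$; or (b) reduce to bounded treewidth via Theorem~\ref{thm-devos} and adapt the path-decomposition machinery of Section~\ref{sec-treedec}, replacing the dichotomy of Lemma~\ref{lemma-main-tw} by one in which the non-$K_{t,m}$ branch exhibits a bag whose internal vertices admit a $t$-list-coloring with bounded max degree. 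Note that the $I_{t-1}+P_m$ substructure, fatal for $\col_\star$, is benign here: the $t-1$ apex vertices can absorb distinct colors, while the remaining path segments contribute bounded degree to each color class.

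The main obstacle is formalizing ``dangerous neighbor'' so that simultaneously (i) the dangerous set has size at most $t-1$ forced purely by $K_{t,m}$-minor-freeness, and (ii) the inductive coloring step preserves a uniform bound $C$ on monochromatic max degree throughout the recursion. Notice that Theorem~\ref{thm-devos} alone cannot close the argument: the disjoint-palette trick used for $\chi_\star$ in Corollary~\ref{cor-double} fails for $\chi^l_\star$, so any successful approach must exploit $K_{t,m}$-minor-freeness more directly than a two-part treewidth decomposition would permit.
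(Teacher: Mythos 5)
This statement is Theorem~\ref{thm-Deltalist}, which the paper explicitly attributes to Ossona de Mendez, Oum and Wood~\cite{OOW16} and \emph{does not prove}; it is cited as supporting context in the concluding remarks. So there is no ``paper's own proof'' to compare against, and your proposal is best judged on its own merits as a reconstruction of the result from~\cite{OOW16}.

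Your necessity direction is sound: it is a straightforward variant of the argument in Lemma~\ref{lem-listksm}, with the monochromatic-component size bound replaced by a monochromatic degree bound, and the pigeonhole on blocks of the $m$-side works exactly as you describe. The sufficiency direction, however, contains a genuine gap that you yourself flag. The induction you outline requires a structural lemma of the form ``every sufficiently large $K_{t,m}$-minor-free graph has a vertex $v$ with a set $D\subseteq N(v)$, $|D|\le t-1$, whose removal-and-greedy-recoloring step preserves the uniform degree bound $C$.'' As stated this is not a theorem; it is the crux of the entire sufficiency direction, and the two attack routes you propose both have problems you do not resolve. Route (b) is a dead end for exactly the reason you point out: Theorem~\ref{thm-devos} plus disjoint palettes is inherently a non-list argument, and the machinery of Section~\ref{sec-treedec} is calibrated for $\col_\star$, not $\chi^l_\Delta$ (a $t$-island bound controls component size, not the degree of color classes across the island boundary). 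Route (a) is closer to the spirit of the actual proof in~\cite{OOW16}, but the step ``if more than $t-1$ neighbors of a low-degree $v$ are centers of large vertex-disjoint fans, one can route a $K_{t,m}$ minor'' is not immediate: disjoint large fans rooted at $t$ neighbors of $v$ do not automatically yield $m$ common branch vertices for the $m$-side of $K_{t,m}$, and making this precise (the right notion of ``heavy'' neighbor, and how to accumulate $m$ shared branch vertices across the fans) is where all the work lies. Additionally, your inductive bookkeeping step asserts ``by the design of $D$ the only color classes that can grow at $v$ or at a neighbor of $v$ are ones already controlled,'' but a color $c'\in L(v)$ avoiding $D$ may still appear on arbitrarily many non-dangerous neighbors of $v$, in which case $v$ itself acquires unbounded monochromatic degree; controlling this forces a more careful definition of $D$ (bounding, for each color, the number of neighbors of $v$ in that color class) than ``at most $t-1$ dangerous neighbors'' suggests. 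In short: necessity is fine, but sufficiency as written is a plan with the hard lemma left open.
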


As we mentioned before, Conjecture~\ref{conj-chara} implies that $\col_\star(\GG)= \chi^l_\star(\GG)$ for every minor-closed class $\GG$.
Analogously, Theorem~\ref{thm-Deltalist} and Conjecture~\ref{conj-chara} imply that
$$\chi^l_\Delta(\GG) \leq \col_\Delta(\GG)\leq \col_\star(\GG) \leq \chi^l_\Delta(\GG) +1.$$ 
Unfortunately, the left inequality does not always holds with equality, as shown in the next lemma.

\begin{lemma}
Let $\mc{O}$ be the class of outerplanar graphs. Then $\chi^l_\Delta(\mc{O})=2$ and $ \col_\Delta(\mc{O})=3$.
\end{lemma}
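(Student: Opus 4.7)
For the equality $\chi^l_\Delta(\mc{O})=2$ I proceed via the two inequalities. The bound $\chi^l_\Delta(\mc{O})\ge 2$ is immediate since $\mc{O}$ contains stars $K_{1,n}$ of unbounded maximum degree, so no finite $C$ makes a $1$-coloring (forced by $1$-lists) $C$-defective for all of $\mc{O}$. For the upper bound $\chi^l_\Delta(\mc{O})\le 2$ I invoke Theorem~\ref{thm-Deltalist}: since $\mc{O}$ is minor-closed and $K_{2,3}\notin\mc{O}$, taking $t=2$ and $m=3$ yields $\chi^l_\Delta(\mc{O})\le 2$.

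The inequality $\col_\Delta(\mc{O})\le 3$ is also immediate: every outerplanar graph contains a vertex $v$ of degree at most $2$, and $\{v\}$ is then a $3$-island with $\Delta(G[\{v\}])=0$. Because every subgraph of an outerplanar graph is outerplanar, this gives $\col_\Delta(\mc{O})\le 3$ with the witness constant $C=0$.

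The substantive work is the lower bound $\col_\Delta(\mc{O})\ge 3$, for which I plan to exhibit, for each $C\ge 1$, an outerplanar graph $G=G_k$ (with $k=\lceil C/2\rceil$) in which every nonempty $2$-island $S$ satisfies $\Delta(G[S])\ge 2k+1>C$. Let $n=3\cdot 2^k$ and take $G_k$ to be the \emph{nested-zigzag triangulation} of the convex $n$-gon on vertices $v_1,\ldots,v_n$: for each $\ell=1,\ldots,k+1$ define the level-$\ell$ subpolygon $P_\ell$ on $V_\ell:=\{v_i: 2^{\ell-1}\mid(i-1)\}$ (so $|V_\ell|=3\cdot 2^{k+1-\ell}$, and $P_{k+1}$ is a triangle), and let $E(G_k)$ be the union of the edge sets of $P_1,\ldots,P_{k+1}$. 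This yields a triangulation of the outer polygon $P_1$, so $G_k$ is outerplanar. Assign each $v_i$ the \emph{level} $\ell(v_i):=1+\min(\nu_2(i-1),k)$, where $\nu_2$ denotes the $2$-adic valuation with the convention $\nu_2(0)=+\infty$; a direct count shows $\deg_{G_k}(v_i)=2\ell(v_i)$, because $v_i$ belongs to exactly $P_1,\ldots,P_{\ell(v_i)}$ and contributes two neighbors to each.

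The key observation is that whenever $\ell\le k$, any vertex $v$ of level exactly $\ell$ occupies an alternating ``odd'' position within $P_\ell$, so both of its $P_\ell$-neighbors belong to $V_{\ell+1}$ and therefore have level at least $\ell+1$. Using this, suppose $S$ is a $2$-island with $\Delta(G_k[S])\le 2k$; then $\deg_{G_k}(v)\le 2k+1$ for every $v\in S$, excluding all level-$(k+1)$ vertices from $S$. A downward induction on $\ell$ from $k+1$ to $1$ now shows that $S$ contains no vertex of level $\ell$: assuming every vertex of level greater than $\ell$ is outside $S$, any putative $v\in S$ of level $\ell$ has its two $P_\ell$-neighbors outside $S$, giving at least two external neighbors and contradicting the $2$-island property. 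Thus $S=\emptyset$, the desired contradiction. The main obstacle is precisely the construction: simpler outerplanar families such as fans, cycles, or triangular strips fail as witnesses because they admit long peripheral paths or cycles whose vertex sets are $2$-islands of maximum degree $2$; the nested zigzag is engineered so that each low-level vertex is irrevocably ``pinned'' by two higher-level neighbors, forcing the inductive cascade all the way down.
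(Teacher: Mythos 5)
Your proof is correct, and it diverges from the paper's argument in two of its three pieces. For $\chi^l_\Delta(\mc{O})=2$ you match the paper exactly (both directions follow from Theorem~\ref{thm-Deltalist}; your explicit star example for the lower bound is subsumed by that theorem). For $\col_\Delta(\mc{O})\le 3$ the paper takes a heavier route: it invokes Theorem~\ref{thm-bndtw} with $\brm{tw}\le 2$ and $K_{2,3}\notin\mc{O}$ to conclude $\col_\star(\mc{O})\le 3$ and then uses $\col_\Delta\le\col_\star$. Your observation that outerplanar graphs are $2$-degenerate, so each nonempty subgraph already contains a singleton $3$-island (witness constant $C=0$), is shorter, self-contained, and avoids the main machinery of the paper entirely. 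For the lower bound $\col_\Delta(\mc{O})\ge 3$ the paper exhibits a considerably simpler witness: take three disjoint paths $P_1,P_2,P_3$ on $C+1$ vertices with respective ends $v_1,v_2,v_3$, and join each $v_i$ to every vertex of $P_{i+1}$ (cyclically). If a $2$-island $S$ avoids $v_1,v_2,v_3$, the vertex of $S\cap V(P_{i})$ nearest to $v_i$ has two external neighbors, so some $v_i\in S$; since $v_i$ has degree greater than $C+1$, at least $C+1$ of its neighbors lie in $S$ and $\Delta(G[S])>C$. Your nested-zigzag triangulation $G_k$ proves the same fact, but via a recursive level structure and a downward induction that cascades the exclusion from the inner triangle outward. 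Both constructions are valid. The paper's is shorter to describe and verify in one step; yours buys a particularly transparent, self-similar maximal outerplanar witness in which the degree stratification makes the obstruction deterministic. Your side remark that fans, cycles, and triangular strips all fail (because they contain long induced paths or cycles forming $2$-islands of maximum degree $2$) is also correct and usefully motivates why some engineering of the witness is required.
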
 
\begin{proof}
We have $\chi^l_\Delta(\mc{O})=2$ by Theorem~\ref{thm-Deltalist}. The treewidth of graphs in $\mc{O}$ is at most $2$ and $K_{2,3} \not \in \mc{O}$. Thus it follows from Theorem~\ref{thm-bndtw} that $\col_\star(\mc{O}) \leq 3$, and therefore $ \col_\Delta(\mc{O}) \leq 3$.

It remains to show that $\col_\star(\mc{O}) \geq 3$ , i.e. for every $C>0$ there exists an outerplanar graph $G$ such that $G$ contains no $2$-island $S$ such that $\Delta(G[S]) \leq C$. Indeed,  consider an outerplanar graph constructed as follows. 
Let $P_1,P_2$ and $P_3$ be three vertex-disjoint paths on $C+1$ vertices. Let $v_i$ be an end of $P_i$ for $i=1,2,3$. The graph $G$ is obtained from $P_1 \cup P_2 \cup P_3$ by joining $v_i$ by an edge to every vertex of $P_{i+1}$ for $i=1,2,3$, where $P_4=P_1$ by convention.
Let $S$ be a $2$-island in $G$. Suppose that $v_1,v_2,v_3 \not \in S$. Then without loss of generality there exists a vertex in $v \in V(P_2) \cap S$. Choose such a vertex $v$ closest to $v_2$ along $P_2$. Then $v$ has two neighbors not in $S$: one along $P_2$ and $v_1$. This contradiction implies that $v_i \in S$ for some $i \in \{1,2,3\}$. As $v_i$ has degree $C+2$, at least $C+1$ of its neighbors lie in $S$, implying $\Delta(G[S]) > C$, as desired.
\end{proof}

Following this line of inquiry one might ask how closely $\col_f(\GG)$ and $\chi^l_f(\GG)$ are related for other
(natural) connected graph parameters $f$.  Dvo\v{r}\'ak, Pek\'arek, and Sereni~\cite{colandch} study this question
in more detail.

\bibliographystyle{siam}
\bibliography{smallcol}

\end{document}